\newcommand\version{November 4, 2017}
\newtheorem{theorem}{Theorem}[section]
\newtheorem{lemma}[theorem]{Lemma}
\newtheorem{corollary}[theorem]{Corollary}
\theoremstyle{definition}
\theoremstyle{remark}
\numberwithin{equation}{section}
\newcommand{\cl}{\mathrm{cl}}
\renewcommand{\epsilon}{\varepsilon}
\newcommand{\N}{\mathbb{N}}
\renewcommand{\phi}{\varphi}
\newcommand{\R}{\mathbb{R}}
\newcommand{\Sph}{\mathbb{S}}
\DeclareMathOperator{\dist}{dist}
\DeclareMathOperator{\ran}{ran}
\DeclareMathOperator{\re}{Re}
\DeclareMathOperator{\spec}{spec}
\DeclareMathOperator{\supp}{supp}
\DeclareMathOperator{\tr}{Tr}
\begin{document}

\title[Eigenvalue bounds for the fractional Laplacian --- \version]{Eigenvalue bounds for the fractional Laplacian:\\ A review}

\author{Rupert L. Frank}
\address{Rupert L. Frank, Ludwig-Maximilans Universit\"at M\"unchen, Theresienstr. 39, 80333 M\"unchen, Germany, and Mathematics 253-37, Caltech, Pasadena, CA 91125, USA}
\email{rlfrank@caltech.edu}

\begin{abstract}
We review some recent results on eigenvalues of fractional Laplacians and fractional Schr\"odinger operators. We discuss, in particular, Lieb--Thirring inequalities and their generalizations, as well as semi-classical asymptotics.
\end{abstract}

%\subjclass[2000]{Primary 35P15, Secondary 35J10, 47F05}

\maketitle

%% The following creates a footnote without changing numbering of later footnotes %%%

\makeatletter{\renewcommand*{\@makefnmark}{}
\footnotetext{\copyright\, 2017 by the author. This paper may be reproduced, in its entirety, for non-commercial purposes.}\makeatother}

%%%%%%%%%%%%%%%%%%%%%%%%%%%%%%%%%%%%%%%%%%%%%%%%%%%%%%%%%%%%%%%%%%%%%%%

\section{Introduction} 

An attempt is made, at the request of the editors of this volume to whom the author is grateful, to review some recent developments concerning eigenvalues of fractional Laplacians and fractional Schr\"odinger operators. Such review is necessarily incomplete and biased towards the author's interests. It is hoped, however, that this collection of results will provide a useful snapshot of a certain line of research and that the open questions mentioned here stimulate some further research.

As is well known, the fractional Laplacian appears in many different areas in connection with non-local phenomena. Here we are particularly interested in problems related to quantum mechanics, where the square root of the Laplacian is used to model relativistic effects. Early works on the one-body and many-body theory include \cite{He,CaMaSi} and \cite{Da,LiTh2,LiYa1,FeLl,LiYa2}, respectively, and we refer to these for further physical motivations.

Let us define the operators in question. For an open set $\Omega\subset\R^d$ we denote by $\mathring H^s(\Omega)$ the set of all functions in the Sobolev space $H^s(\R^d)$ which vanish almost everywhere in $\R^d\setminus\Omega$. We denote the Fourier transform of $\psi$ by
$$
\hat\psi(p) := \frac{1}{(2\pi)^{d/2}} \int_{\R^d} e^{-ip\cdot x} \psi(x)\,dx \,.
$$
The non-negative quadratic form
$$
\int_{\R^d} |p|^{2s} |\hat\psi(p)|^2\,dp \,,
\qquad \psi\in \mathring H^s(\Omega) \,,
$$
(note that $\psi$ is zero almost everywhere on $\R^d\setminus\Omega$) is closed in the Hilbert space $L^2(\Omega)$ and therefore generates a self-adjoint, non-negative operator
$$
H^{(s)}_\Omega
\qquad\text{in}\ L^2(\Omega) \,.
$$
For $0<s<1$ we call $H^{(s)}_\Omega$ the \emph{fractional Laplacian} in $\Omega$. When $s=1$, this construction gives the usual Dirichlet Laplacian, which we denote by $-\Delta_\Omega=H_\Omega^{(1)}$. When $\Omega=\R^d$, then $H_{\R^d}^{(s)}$ coincides with the fractional power $s$ (in the sense of the functional calculus) of the operator $-\Delta_{\R^d}$ and we will simplify notation by writing $(-\Delta)^s=H^{(s)}_{\R^d}$. It is important to note that, if $\Omega\neq\R^d$ (up to sets of capacity zero), then $H^{(s)}_\Omega$ does \emph{not} coincide with the fractional power of the operator $-\Delta_\Omega$ and, in fact, the comparison of these two operators is one of the recurring themes in this review.

There is a useful alternative expression for the fractional Laplacian, namely,
$$
\int_{\R^d} |p|^{2s} |\hat\psi(p)|^2\,dp = a_{d,s} \iint_{\R^d\times\R^d} \frac{|\psi(x)-\psi(y)|^2}{|x-y|^{d+2s}} \,dx\,dy
$$
for all $\psi\in H^s(\R^d)$ with
\begin{equation}
\label{eq:formconst}
a_{d,s} = 2^{2s-1} \pi^{-d/2} \frac{\Gamma(\frac{d+2s}{2})}{|\Gamma(-s)|} \,.
\end{equation}
This is a classical computation, which we recall in Appendix \ref{app:const}.

Besides the fractional Laplacian on an open set we will also be interested in the fractional Schr\"odinger operator $(-\Delta)^s + V$. Heuristically, the connection between the two operators is that the fractional Laplacian in $\Omega$ is the special case of the fractional Schr\"odinger operator with the potential $V$ which equals $0$ on $\Omega$ and $+\infty$ on its complement. This intuition can be made precise as a limiting theorem, at least in the case of a not too irregular boundary, but we will not make use of this here. Nevertheless, it is useful to keep this connection in mind when comparing the results for both operators.

As we said, our main concern here are eigenvalue bounds for $H^{(s)}_\Omega$ and $(-\Delta)^s+V$. It is technically convenient to consider, instead of eigenvalues, the numbers given by the variational principle. Namely, for a general self-adjoint operator $A$ with quadratic form $a$ in a Hilbert space and for $n\in\N$ we define
$$
E_n(A) := \sup_{\psi_1,\ldots,\psi_{n-1}} \left( \inf_{0\neq \psi\bot\psi_1,\ldots,\psi_{n-1}} \frac{a[\psi]}{\|\psi\|^2} \right).
$$
According to the variational principle (see, e.g., \cite[Theorem XIII.1]{ReSi}), if $E_n(A)<\inf\text{ess-}\spec(A)$, then $E_n(A)$ is the $n$-th eigenvalue of $A$, counting multiplicities. In general, however, $E_n(A)$ need not be an eigenvalue. Since our tools in this paper are of variational nature, they lead naturally to inequalities for $E_n(A)$, independently of whether or not it actually is an eigenvalue.

\medskip

Let us briefly outline the structure this review. In Section \ref{sec:single} we begin with lower bounds on the ground state energies $E_1(H^{(s)}_\Omega)$ and $E_1((-\Delta)^s+V)$. These lower bounds come naturally from the shape optimization problems of minimizing $E_1(H^{(s)}_\Omega)$ among all $\Omega$ with given measure and minimizing $E_1((-\Delta)^s+V)$ among all $V$ with given $L^p$ norm. The (classical) answers are given in Theorems \ref{fk} and \ref{ke}. We then turn to comparing the eigenvalues of the operators $H^{(s)}_\Omega$ and $(-\Delta_\Omega)^s$ and recall a theorem from \cite{ChSo}.

In Section \ref{sec:asymp} we discuss the asymptotics of $E_n(H_\Omega^{(s)})$ as $n\to\infty$ and of $\#\{ n: E_n((-\Delta)^s+\alpha V)<0\}$ as $\alpha\to\infty$. Both questions are closely related, because studying the asymptotics of $E_n(H_\Omega^{(s)})$ as $n\to\infty$ is the same as studying $\#\{ n: E_n(H_\Omega^{(s)})<\mu\}$ as $\mu\to\infty$, which is the same as studying $\#\{ n: E_n(h^{2s} H_\Omega^{(s)} -1)<0\}$ as $h\to 0$. Clearly, studying $\#\{ n: E_n((-\Delta)^s+\alpha V)<0\}$ as $\alpha\to\infty$ is the same as studying $\#\{ n: E_n(h^{2s} (-\Delta)^s+ V)<0\}$ as $h\to 0$, so both questions correspond to a semi-classical limit with an effective Planck constant $h$ tending to zero. While the leading term in the asymptotics is well known and given by a Weyl-type formula, there are still open questions corresponding to subleading corrections.

In Section \ref{sec:unif} we supplement the asymptotic results on the number and sums of eigenvalues by `uniform' inequalities which hold not only in the asymptotic regimes considered in the previous section. The important feature of these inequalities is, however, that they have a form reminiscent of the asymptotics. We present such eigenvalue bounds not only for $H_\Omega^{(s)}$ and $(-\Delta)^s +V$, but also for operators of the form $(-\Delta)^s -W +V$, where $W$ is an explicit `Hardy weight'.

We conclude with a short Section \ref{sec:omitted} on (some of) the topics that we have not treated in this paper.

\subsection*{Acknowledgement}
The author would like to thank B. Dyda, L. Geisinger, E. Lenzmann, E. Lieb, R. Seiringer and L. Silvestre for collaborations involving the fractional Laplacian. He thanks R. Ba\~nuelos, M. Kwa\'snicki, F. Maggi and R. Song for helpful comments. Partial support by U.S. National Science Foundation DMS-1363432 is acknowledged.

%%%%%%%%%%%%%%%%%%%%%%%%%%%%%%%%%%%%%%%%%%%%%%%%%%%%%%%%%

\section{Bounds on single eigenvalues}\label{sec:single}

\subsection{The fractional Faber--Krahn inequality}

We recall that $E_1(H_\Omega^{(s)})$ denotes the ground state energy of the fractional Laplacian on an open set $\Omega\subset\R^d$. Using Sobolev interpolation inequalities on $\R^d$ (see, for instance, \eqref{eq:sobolevinterpol} below) and H\"older's inequality it is easy to prove that
$$
E_1(H_\Omega^{(s)}) \geq C_{d,s} |\Omega|^{-2s/d}
$$
for some positive constant $C_{d,s}$ depending only on $d$ and $s$. The fractional Faber--Krahn inequality in the following theorem says that the optimal value of the constant $C_{d,s}$ is attained when $\Omega$ is a ball. We recall that for any measurable set $E\subset\R^d$ of finite measure, $E^*$ denotes the centered, open ball with radius determined such that $|E^*|=|E|$.

\begin{theorem}\label{fk}
Let $\Omega\subset\R^d$ be open with finite measure. Then
$$
E_1(H_\Omega^{(s)}) \geq E_1(H_{\Omega^*}^{(s)})
$$
with equality if and only if $\Omega$ is a ball.
\end{theorem}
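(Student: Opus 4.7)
The plan is to argue via symmetric decreasing rearrangement. By the variational characterization,
\[
E_1(H_\Omega^{(s)}) = \inf\Bigl\{\int_{\R^d}|p|^{2s}|\hat\psi(p)|^2\,dp : \psi\in\mathring H^s(\Omega),\ \|\psi\|_{L^2}=1\Bigr\},
\]
so it suffices, for each admissible $\psi$, to produce a trial function supported in $\Omega^*$ with no larger Rayleigh quotient. First I would reduce to $\psi\geq 0$: replacing $\psi$ by $|\psi|$ preserves the $L^2$ norm and cannot increase the form, because $\bigl||\psi(x)|-|\psi(y)|\bigr|\leq|\psi(x)-\psi(y)|$ pointwise and the form admits the nonlocal representation with constant $a_{d,s}$ recalled in the introduction. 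Then let $\psi^*$ be the symmetric decreasing rearrangement of $\psi$. Equimeasurability yields $\|\psi^*\|_{L^2}=\|\psi\|_{L^2}$, and $\{\psi^*>0\}$ is the centered open ball of measure $|\{\psi>0\}|\leq|\Omega|$, hence contained in $\Omega^*$; this will place $\psi^*$ in $\mathring H^s(\Omega^*)$ once finiteness of the form is established.

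The core is the rearrangement inequality
\[
\int_{\R^d}|p|^{2s}|\widehat{\psi^*}(p)|^2\,dp\ \leq\ \int_{\R^d}|p|^{2s}|\hat\psi(p)|^2\,dp.
\]
I would derive it from the Bochner subordination identity
\[
|p|^{2s} = \frac{s}{\Gamma(1-s)}\int_0^\infty\bigl(1-e^{-t|p|^2}\bigr)\,\frac{dt}{t^{1+s}}\qquad(0<s<1),
\]
which recasts the quadratic form as $\tfrac{s}{\Gamma(1-s)}\int_0^\infty\bigl(\|\psi\|_2^2-\langle\psi,e^{t\Delta}\psi\rangle\bigr)t^{-1-s}\,dt$. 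Since the heat kernel $(4\pi t)^{-d/2}e^{-|x-y|^2/(4t)}$ is a symmetric decreasing function of $x-y$, the Riesz rearrangement inequality (with the middle function already equal to its symmetrization) delivers $\langle\psi,e^{t\Delta}\psi\rangle\leq\langle\psi^*,e^{t\Delta}\psi^*\rangle$ for every $t>0$. Integrating against $t^{-1-s}\,dt$ and combining with $\|\psi\|_2=\|\psi^*\|_2$ produces the displayed monotonicity, and inserting $\psi^*$ into the Rayleigh quotient for $\Omega^*$ proves $E_1(H_{\Omega^*}^{(s)})\leq E_1(H_\Omega^{(s)})$.

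The hard part is the rigidity statement. Assuming equality, I would use the compact embedding $\mathring H^s(\Omega)\hookrightarrow L^2(\Omega)$, valid when $|\Omega|<\infty$, to secure a nonnegative minimizer $\psi$ on $\Omega$; its rearrangement $\psi^*$ is then a minimizer on $\Omega^*$. A Perron--Frobenius argument using the strictly positive integral kernel of the resolvent of $H_\Omega^{(s)}$ forces $\psi>0$ almost everywhere in $\Omega$, so by equimeasurability the support of $\psi^*$ is all of $\Omega^*$. Saturation in the chain above then requires $\langle\psi,e^{t\Delta}\psi\rangle=\langle\psi^*,e^{t\Delta}\psi^*\rangle$ for every $t>0$. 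The main obstacle is converting this identity into a geometric conclusion about $\Omega$: for this I would invoke Burchard's rigidity theorem for the Riesz rearrangement inequality, whose hypotheses are met because the heat kernel is strictly radially decreasing, to deduce that $\psi$ coincides with $\psi^*$ up to a translation. Since $\psi$ is supported in $\Omega$ and $\psi^*$ is supported in $\Omega^*$, the set $\Omega$ must coincide with $\Omega^*$ up to a translation and a set of measure zero, i.e., $\Omega$ is a ball.
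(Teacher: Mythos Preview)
Your proof is correct and follows essentially the same route as the paper: symmetric decreasing rearrangement for the inequality, and the strict-rearrangement equality case for rigidity. The only difference is one of presentation rather than substance: the paper simply invokes the Almgren--Lieb inequality $\|(-\Delta)^{s/2}\psi\|^2\geq\|(-\Delta)^{s/2}\psi^*\|^2$ as a black box and cites \cite{BuHi,FrSe} for strictness, whereas you unpack the former via Bochner subordination and the Riesz rearrangement inequality for the heat kernel, and argue the latter through Burchard's rigidity theorem applied at a single $t$---which is in fact the content behind the citations.
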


This theorem follows easily using symmetric decreasing rearrangement (see, e.g., \cite{LiLo} for a textbook presentation). We know \cite{AL} that
\begin{equation}
\label{eq:rearr}
\|(-\Delta)^{s/2}\psi\|^2 \geq \|(-\Delta)^{s/2}\psi^*\|^2 \,, 
\end{equation}
where $\psi^*$ denotes the symmetric decreasing rearrangement of $\psi$, and since $\|\psi^*\|^2= \|\psi\|^2$ and $\psi^*$ is supported in $\overline{\Omega^*}$, we obtain the inequality in the theorem. The uniqueness of the ball follows from the strictness statement for \eqref{eq:rearr}, see \cite{BuHi,FrSe}. (We also mention that a version of \eqref{eq:rearr} for functions on a interval appears in \cite{GaRo}.) An alternative proof of Theorem~\ref{fk}, based on a comparison result for the corresponding heat equations, can be found in \cite{SiVaVo}. For results related to and generalizing Theorem~\ref{fk}, see \cite{BaMH}. 

It would be interesting to supplement Theorem \ref{fk} with a stability result analogous to \cite{FuMaPr,BrDPVe}, namely to show that $E_1(H_\Omega^{(s)}) - E_1(H_{\Omega^*}^{(s)})$ is bounded from below by a constant (depending only on $s$ and $d$) times $|\Omega|^{-2s/d-2} \inf\{ |B\Delta\Omega|^2:\ B \ \text{ball with}\ |B|=|\Omega|\}$.

Theorem \ref{fk} corresponds to minimizing $E_1(H_\Omega^{(s)})$ among all sets $\Omega$ with given measure. Another interesting problem is to minimize $E_1(H_\Omega^{(s)})$ among all \emph{convex} sets $\Omega$ with given inner radius $r_\text{in}(\Omega):=\sup_{x\in\Omega} \dist(x,\Omega^c)$. Optimal results for this question appear in \cite{BLM,Me}.

%%%%%%%%%%%%%%%%%%%%%%%%%%%%%%%%%%%%%%%%%%%%%%

\subsection{The fractional Keller inequality}

We recall that $E_1((-\Delta)^s +V)$ denotes the ground state energy of the fractional Schr\"odinger operator. In \cite{Ke} Keller asked for $s=1$ how small the ground state energy can be for a given $L^p$ norm of the potential; see also \cite{LiTh}. The following theorem generalizes this result to the fractional case.

\begin{theorem}
\label{ke}
Let $d\geq1$, $0<s<1$ and $\gamma>0$. If $d=1$ and $s>1/2$ we assume in addition that $\gamma\geq 1-1/(2s)$. Then
$$
K_{\gamma,d,s} := - \inf_V \frac{E_1((-\Delta)^s+V)}{\|V\|_{\gamma+d/(2s)}^{1+ d/(2s\gamma)}} <\infty \,.
$$
Moreover, there is a positive, radial, symmetric decreasing function $W$ such that the inequality
\begin{equation}
\label{eq:ke}
E_1((-\Delta)^s+V) \geq - K_{\gamma,d,s} \left( \int_{\R^d} |V|^{\gamma+d/(2s)}\,dx \right)^{1/\gamma}
\end{equation}
is strict unless $V= -b^{-2s} W((x-a)/b)$ for some $a\in\R^d$ and $b>0$.
\end{theorem}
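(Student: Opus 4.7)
The plan is to derive the inequality by combining the variational principle for $E_1$ with H\"older's inequality and a fractional Gagliardo--Nirenberg--Sobolev (GNS) inequality. First, since the ratio $-E_1((-\Delta)^s + V)/\|V\|_p^{p/\gamma}$ (with $p := \gamma + d/(2s)$) only increases when $V$ is replaced by $-V_-$, I may assume $V = -U$ with $U \geq 0$. Setting $q := 2p/(p-1)$, H\"older's inequality gives $\int U|\psi|^2 \leq \|U\|_p \|\psi\|_q^2$, and hence
$$
E_1((-\Delta)^s - U) \geq \inf_{\|\psi\|_2=1}\left( \|(-\Delta)^{s/2}\psi\|_2^2 - \|U\|_p \|\psi\|_q^2 \right) .
$$
A scaling check confirms that, under the hypotheses of the theorem, $q$ lies in the subcritical range for $(-\Delta)^{s/2}$, so I can invoke the GNS inequality $\|\psi\|_q \leq C\|(-\Delta)^{s/2}\psi\|_2^\theta \|\psi\|_2^{1-\theta}$ with $\theta := (d/2 - d/q)/s \in (0,1)$; the borderline case $q = \infty$ in dimension $d = 1$ with $s > 1/2$ and $\gamma = 1 - 1/(2s)$ is covered by the Morrey-type embedding $H^s(\R) \hookrightarrow L^\infty(\R)$.

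Substituting GNS and setting $t := \|(-\Delta)^{s/2}\psi\|_2$, the lower bound becomes $\inf_{t>0}(t^2 - C^2\|U\|_p t^{2\theta})$, which by explicit minimization equals $-(1-\theta)(\theta C^2)^{\theta/(1-\theta)}\|U\|_p^{1/(1-\theta)}$. A short algebraic manipulation using $q = 2p/(p-1)$ and $p = \gamma + d/(2s)$ reduces the exponent $1/(1-\theta)$ to $p/\gamma = 1 + d/(2s\gamma)$, which matches the exponent in \eqref{eq:ke} and establishes the finiteness of $K_{\gamma,d,s}$.

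For the cases of equality, I would first use symmetric-decreasing rearrangement (via \eqref{eq:rearr}) to reduce to positive radial decreasing $\psi$ and $U$. Equality in H\"older then forces $U = \mu\, \psi^{q-2}$ for some $\mu > 0$, while equality in GNS forces $\psi$ to be a GNS extremal; combining these with the eigenvalue equation $(-\Delta)^s\psi - U\psi = E_1 \psi$ yields the fractional semilinear ground-state equation
$$
(-\Delta)^s \psi + |E_1|\psi = \mu\, \psi^{q-1} .
$$
A rescaling of the (normalized) solution $Q$ of $(-\Delta)^s Q + Q = Q^{q-1}$ then identifies every Keller extremal as $V(x) = -b^{-2s} W((x-a)/b)$ with $W := Q^{q-2}$, which is positive, radial, and symmetric decreasing.

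The hard part will be the existence and uniqueness (modulo translation) of the positive radial decreasing $Q$. Existence follows from concentration--compactness applied to the GNS variational problem. Uniqueness, the deeper issue, is classical for the local case $s = 1$ (Kwong, Weinstein) but for $0 < s < 1$ requires the considerably more intricate analysis of Frank--Lenzmann (and Frank--Lenzmann--Silvestre), via the Caffarelli--Silvestre extension and ODE-type oscillation estimates; without this uniqueness, the function $W$ in the statement would not be canonically determined.
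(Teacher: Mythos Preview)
Your proposal is correct and follows essentially the same route as the paper: the variational characterization of $E_1$, H\"older's inequality to identify the optimal $V$ as a power of $|\psi|$, reduction to the Gagliardo--Nirenberg--Sobolev inequality \eqref{eq:sobolevinterpol}, rearrangement via \eqref{eq:rearr} to obtain a radial decreasing optimizer, and finally the Frank--Lenzmann(--Silvestre) uniqueness results for the characterization of extremals. The only cosmetic difference is that the paper normalizes $\|V\|_p=\|\psi\|_2=1$ and writes the problem as $\inf\mathcal E_q[\psi]$, whereas you carry out the explicit one-variable minimization in $t=\|(-\Delta)^{s/2}\psi\|_2$; these are equivalent by scaling.
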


Let us briefly sketch the proof. The key idea (essentially contained in \cite{LiTh} for $s=1$) is that the inequality $K_{\gamma,d,s}<\infty$ is equivalent to a Sobolev interpolation inequality. According to the variational definition of $E_1((-\Delta)^s +V)$ we have
$$
- K_{\gamma,d,s} = \inf_V \inf_\psi \frac{\|(-\Delta)^{s/2}\psi\|^2 + \int_{\R^d} V|\psi|^2\,dx}{\|\psi\|^2 \ \|V\|_{\gamma+d/(2s)}^{1+ d/(2s\gamma)}}
$$
Since the quotient in this formula remains invariant if we replace both $V(x)$ by $b^2 V(bx)$ and $\psi(x)$ by $c\psi(bx)$ for arbitrary $b,c>0$, we can restrict the infimum to potentials $V$ with $\|V\|_{\gamma+d/(2s)}=1$ and to functions $\psi$ with $\|\psi\|=1$. Moreover, since the quotient does not increase if we replace $V$ by $-|V|$ we can restrict the infimum to potentials $V\leq 0$. We summarize these findings as
$$
- K_{\gamma,d,s} = \inf \left\{ \mathcal E_q[\psi] + \mathcal H_q[\psi,U] :\ U\geq 0\,,\ \|\psi\|=\|U\|_{q/(q-2)}=1 \right\}
$$
with $q\geq 2$ such that $1/(\gamma+d/(2s)) + 2/q = 1$,
$$
\mathcal E_q[\psi] = \|(-\Delta)^{s/2}\psi\|^2 - \|\psi\|_q^2
$$
and
$$
\mathcal H_q[\psi,U] = \|\psi\|_q^2 - \int_{\R^d} U|\psi|^2\,dx \,.
$$
By H\"older's inequality we have $\mathcal H_q[\psi,U]\geq 0$ for $U\geq 0$ with $\|U\|_{q/(q-2)}=1$, and equality holds if and only if $U = (|\psi|/\|\psi\|_q)^{q-2}$. Thus, $K_{\gamma,d,s}<\infty$ is equivalent to
\begin{equation}
\label{eq:keeinf}
\inf\left\{ \mathcal E_q[\psi] :\ \|\psi\|=1 \right\} > - \infty \,,
\end{equation}
and there is a bijective correspondence between $V$'s realizing equality in \eqref{eq:ke} and $\psi$'s realizing the infimum in \eqref{eq:keeinf}. The statement \eqref{eq:keeinf} is, by scaling, equivalent to the Sobolev interpolation inequality
\begin{equation}
\label{eq:sobolevinterpol}
\| (-\Delta)^{s/2}\psi\|^{2\theta} \|\psi\|^{2(1-\theta)} \geq \mathcal S_{d,q,s} \|\psi\|_q^2
\end{equation}
with a constant $\mathcal S_{d,q,s}>0$ (and some $\theta\in(0,1)$ uniquely determined by scaling). This inequality is well known to hold for $2\leq q\leq 2d/(d-2s)$ if $d>2s$, for $2\leq q<\infty$ if $d=2s$ and for $2\leq q\leq\infty$ if $d<2s$. Therefore, we deduce that $K_{\gamma,d,s}<\infty$ under the assumptions on $\gamma$ in the theorem. Moreover, if $\mathcal S_{d,q,s}$ denotes the optimal constant in \eqref{eq:sobolevinterpol}, then it is also well-known that there is a minimizer $\psi$ for which equality holds (see, for instance, \cite{CaFrLi} for a proof for $s=1$; the necessary modifications for $s<1$ are, for instance, in \cite{BeFrVi}). By the rearrangement inequality \eqref{eq:rearr}, this $\psi$ can be chosen positive, radial and symmetric decreasing. It was recently proved in \cite{FrLe,FrLeSi} that there is a unique function $Q$ such that any function achieving equality in \eqref{eq:sobolevinterpol} coincides with $Q$ after translation, dilation and multiplication by a constant, which leads to the uniqueness statement in Theorem \ref{ke}. This completes our sketch of the proof of the theorem.

We expect that the method from \cite{CaFrLi}, together with the non-degeneracy results from \cite{FrLe,FrLeSi}, leads to a stability version of \eqref{eq:ke}.

%%%%%%%%%%%%%%%%%%%%%%%%%%%%%%%%%%%%%%%%%%%%%%%%%%%%%%%%%

\subsection{Comparing eigenvalues of $H_\Omega^{(s)}$ and $(-\Delta_\Omega)^s$}

It is important to distinguish between $H_\Omega^{(s)}$, the fractional Laplacian on $\Omega$, and the fractional power $(-\Delta_\Omega)^s$ of the Dirichlet Laplacian. These two operators are different, but, as shown in the following theorem, the first one is always less or equal than the second one. We recall that for two operators $A,B$, which are bounded from below, we write $A\leq B$ if their quadratic forms $a,b$ with form domains $\mathcal D[a]$, $\mathcal D[b]$ satisfy $\mathcal D[a]\supset\mathcal D[b]$ and $a[u]\leq b[u]$ for every $u\in\mathcal D[b]$. Note that $A\leq B$ implies $E_n(A)\leq E_n(B)$ for all $n\in\N$.

\begin{theorem}\label{evcomp}
Let $\Omega\subset\R^d$ be open and $0<s<1$. Then
\begin{equation}
\label{eq:opcomp}
H_\Omega^{(s)} \leq (-\Delta_\Omega)^s
\end{equation}
In particular, 
\begin{equation}
\label{eq:evcomp}
E_n(H_\Omega^{(s)}) \leq E_n((-\Delta_\Omega)^s) = \left( E_n(-\Delta_\Omega) \right)^s 
\qquad\text{for all}\ n\in\N \,.
\end{equation}
Moreover, unless $\R^d\setminus\Omega$ has zero capacity, the operators $H_\Omega^{(s)}$ and $(-\Delta_\Omega)^s$ do not coincide.
\end{theorem}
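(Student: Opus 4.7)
The plan splits into the operator inequality, the spectral identity, and the strictness assertion; the last is the most delicate. My main tool for the first is L\"owner--Heinz operator monotonicity of $t\mapsto t^s$ for $0<s<1$, while for the last I would combine Bochner subordination with the standard probabilistic comparison of Dirichlet and free Brownian heat kernels.

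For \eqref{eq:opcomp} I would work on $L^2(\R^d)=L^2(\Omega)\oplus L^2(\Omega^c)$ via zero extension, and compare the two non-negative self-adjoint operators $-\Delta_{\R^d}$ and $(-\Delta_\Omega)\oplus(-\Delta_{\Omega^c})$. The form domain of the direct sum, $\mathring H^1(\Omega)\oplus\mathring H^1(\Omega^c)$, embeds into $H^1(\R^d)$ with matching Dirichlet integrals, hence $-\Delta_{\R^d}\leq(-\Delta_\Omega)\oplus(-\Delta_{\Omega^c})$ in the form sense. L\"owner--Heinz then yields $(-\Delta_{\R^d})^s\leq(-\Delta_\Omega)^s\oplus(-\Delta_{\Omega^c})^s$, and evaluating both forms on vectors $\psi\oplus 0$ with $\psi\in\mathcal D[(-\Delta_\Omega)^{s/2}]$ gives exactly \eqref{eq:opcomp}, since the left-hand side is by construction $\langle\psi,H_\Omega^{(s)}\psi\rangle$. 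The inequalities in \eqref{eq:evcomp} then follow from the variational principle together with the spectral-theorem identity $E_n((-\Delta_\Omega)^s)=(E_n(-\Delta_\Omega))^s$ for the strictly increasing function $t\mapsto t^s$.

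For the strictness statement I would apply the subordination formula $a^s=(s/\Gamma(1-s))\int_0^\infty(1-e^{-ta})\,t^{-s-1}\,dt$ to both operators. For a test function $\psi\in C_c^\infty(\Omega)$ this yields the identity
$$
\langle\psi,((-\Delta_\Omega)^s-H_\Omega^{(s)})\psi\rangle=\frac{s}{\Gamma(1-s)}\int_0^\infty t^{-s-1}\iint_{\Omega\times\Omega}\bigl(p_t^{\R^d}(x,y)-p_t^\Omega(x,y)\bigr)\psi(x)\psi(y)\,dx\,dy\,dt,
$$
where $p_t^\Omega$ and $p_t^{\R^d}$ are the Dirichlet and free heat kernels. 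Choosing $\psi\geq 0$, the integrand is pointwise non-negative by the standard domain monotonicity $p_t^\Omega\leq p_t^{\R^d}$. If the two operators are assumed to coincide, the outer integral vanishes, which, by varying $\psi$ over a countable family of non-negative bumps exhausting $\Omega$ and invoking continuity of the heat kernels, forces $p_t^{\R^d}=p_t^\Omega$ on all of $\Omega\times\Omega$ for every $t>0$. The strong Markov identity $p_t^{\R^d}(x,y)-p_t^\Omega(x,y)=\mathbb{E}_x[p_{t-\tau}^{\R^d}(B_\tau,y);\tau\leq t]$, with $\tau$ the first hitting time of $\R^d\setminus\Omega$, then gives $P_x(\tau<\infty)=0$ for a.e.\ $x\in\Omega$, i.e.\ $\R^d\setminus\Omega$ is polar.

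The main obstacle is this last step: matching the probabilistic conclusion with the notion of capacity in the statement. The subordination above converts the fractional comparison into a Brownian (i.e.\ $s=1$) heat-kernel comparison, so what the argument naturally produces is zero Newtonian capacity; one has to invoke the classical equivalence between polarity for Brownian motion and zero capacity in order to phrase the conclusion in the form stated. The converse direction -- that a polar complement does make the two operators coincide -- can be read off the same subordination identity combined with the fact that $\mathring H^s(\Omega)$ and $\mathcal D[(-\Delta_\Omega)^{s/2}]$ both collapse to $H^s(\R^d)$ in that exceptional case.
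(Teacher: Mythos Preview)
Your argument is correct, and both halves are close in spirit to the paper's but packaged differently.

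For \eqref{eq:opcomp} the paper does not pass through the auxiliary direct sum $(-\Delta_\Omega)\oplus(-\Delta_{\Omega^c})$; instead it invokes the compression (Sherman--Davis) inequality
\[
P\phi(PAP)P \geq P\phi(A)P
\]
for operator monotone $\phi$, with $A=-\Delta$ on $L^2(\R^d)$, $P$ the multiplication by $\chi_\Omega$, and $\phi(E)=E^s$. Since $H_\Omega^{(s)}=P(-\Delta)^sP$ and $PAP=-\Delta_\Omega$ in the form sense, this gives \eqref{eq:opcomp} without ever introducing an operator on $L^2(\Omega^c)$. Your L\"owner--Heinz route and the paper's compression route both reduce, after unwinding the integral representation of $E^s$, to the same resolvent comparison $P(PAP+t)^{-1}P\leq P(A+t)^{-1}P$, so the difference is largely cosmetic. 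The paper's packaging has the minor advantage of sidestepping the question of whether $\mathring H^1(\Omega^c)$ is dense in $L^2(\Omega^c)$ (it need not be, e.g.\ in $d=1$ when $\partial\Omega$ contains a fat Cantor set), though as you implicitly use only the $L^2(\Omega)$ block this does not actually cause trouble.

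For the strictness the approaches diverge more substantially. The paper stays abstract: it shows that equality in $P\phi(PAP)P\geq P\phi(A)P$ forces $\ran P$ to be a reducing subspace for $A$, and then observes that $L^2(\Omega)$ reduces $-\Delta$ on $L^2(\R^d)$ only when $\R^d\setminus\Omega$ has zero capacity. Your subordination/heat-kernel argument is a valid concrete alternative that trades operator theory for probability; it has the pleasant feature that the same identity already re-proves the inequality (since $p_t^\Omega\leq p_t^{\R^d}$), so your L\"owner--Heinz step is in fact redundant. The one point worth tightening is the passage from ``the double integral vanishes for every non-negative bump'' to $p_t^{\R^d}=p_t^\Omega$ on all of $\Omega\times\Omega$: testing with single bumps only kills the kernel on diagonal blocks $U_n\times U_n$, so you should also test with sums $\psi_n+\psi_m$ (still non-negative) to reach the off-diagonal blocks.
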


The first part of the theorem is due to Chen--Song \cite{ChSo} (see also \cite{deBlMH} and its generalization in \cite{ChSo2}), which extends earlier results in \cite{BaKu04} for $s=1/2$ and in \cite{deBl} for $s$ irrational. The second part concerning strictness is from \cite{FrGe}, where it is also shown that, in a certain sense, $E_n(H_\Omega^{(s)})$ and $E_n((-\Delta_\Omega)^s)$ have the same leading term as $n\to\infty$, but a different subleading term; see Corollary \ref{getoursumsecondcor} below for a precise statement. An alternative proof of Theorem \ref{evcomp}, which yields strict inequality in \eqref{eq:evcomp} for any $n$ for bounded $\Omega$, is in \cite{MuNa} and is based on the Caffarelli--Silvestre extension technique \cite{CaSi}. In fact, it was recently shown in \cite{KwLaSi}, using Jensen's inequality, that $(0,1)\ni s\mapsto E_n(H_\Omega^{(s)})^{1/s}$ is \emph{strictly} increasing for any $n$ if $\Omega$ is bounded.

Let us sketch the idea of the proof of Theorem \ref{evcomp} in \cite{FrGe}. It is based on the observation that, if $A$ is a non-negative operator in a Hilbert space with trivial kernel, $P$ an orthogonal projection and $\phi$ an operator monotone function on $(0,\infty)$, then
\begin{equation}
\label{eq:opmonineq}
P\phi(PAP)P \geq P\phi(A)P \,.
\end{equation}
(This is closely related to the Sherman--Davis inequality, see, e.g., \cite[Theorem 4.19]{Ca}.) Using Loewner's integral representation of operator monotone functions (see, for instance, \cite{Bat} or \cite[Theorem 1.6]{Si3}), \eqref{eq:opmonineq} follows from
\begin{equation}
\label{eq:opmonineq1}
P(PAP)^{-1} P \leq P A^{-1} P \,,
\end{equation}
which, in turn, can be proved using a variational characterization of the inverse operator in the spirit of Dirichlet's principle. Inequality \eqref{eq:opcomp} follows immediately from \eqref{eq:opmonineq} with the choice $A=-\Delta$ in $L^2(\R^d)$, $P$ = multiplication by the characteristic function of $\Omega$ (note that $H^{(s)}_\Omega = PA^sP$ in the quadratic form sense) and $\phi(E)=E^s$ (which is operator monotone for $0<s<1$). Note that this argument gives an analogue of \eqref{eq:opcomp} for any operator monotone function. If we only want \eqref{eq:opcomp}, we do not need Loewner's theorem, but only the integral representation
$$
E^s = \frac{\sin(\pi s)}{\pi} \int_0^\infty t^s \left( \frac{1}{t} - \frac{1}{t+E} \right)dt
\qquad\text{if}\ 0<s<1 \,.
$$

Analyzing the cases of equality in \eqref{eq:opmonineq1} shows that, under the assumption that $\phi$ is not affine linear, equality in \eqref{eq:opmonineq} holds iff $\ran P$ is a reducing subspace of $A$. (This is stated in \cite{FrGe} only for positive definite $A$, which is needed for \eqref{eq:opmonineq1}, but when passing from \eqref{eq:opmonineq1} to \eqref{eq:opmonineq} one always has a positive definite operator.) Since $L^2(\Omega)$ is not a reducing subspace for $-\Delta$ in $L^2(\R^d)$ unless $\R^d\setminus\Omega$ has capacity zero, we obtain the second part of the theorem.

\medskip

While Theorem \ref{evcomp} gives an upper bound on $E_n(H_\Omega^{(s)})$ in terms of $E_n(-\Delta_\Omega^s)$, the following theorem, also due to Chen--Song \cite{ChSo}, yields a lower bound.

\begin{theorem}\label{evcomplower}
Let $\Omega\subset\R^d$ be bounded and satisfy the exterior cone condition and let $0<s<1$. Then there is a $c_{\Omega,s}>0$ such that
\begin{equation}
\label{eq:evcomplower}
E_n(H_\Omega^{(s)}) \geq c_{\Omega,s}\, E_n((-\Delta_\Omega)^s) = c_{\Omega,s} \left( E_n(-\Delta_\Omega) \right)^s 
\qquad\text{for all}\ n\in\N \,.
\end{equation}
If $\Omega$ is convex, \eqref{eq:evcomplower} holds with $c_{\Omega,s}=1/2$.
\end{theorem}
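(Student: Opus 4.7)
The plan is to combine a kernel-level comparison of the two semigroups with intrinsic ultracontractivity to pass from heat-kernel estimates to individual eigenvalue bounds. Probabilistically, the heat kernel $\tilde p^{(1)}_t$ of $H_\Omega^{(s)}$ describes a $2s$-stable process killed on exiting $\Omega$, whereas the heat kernel $\tilde p^{(2)}_t$ of $(-\Delta_\Omega)^s$ describes a killed Brownian motion subordinated by an $s$-stable subordinator. Realizing the $2s$-stable process as $X_u=B_{T_u}$ on a common probability space with an independent $s$-stable subordinator $T$, the event $\{\sigma_\Omega^B>T_t\}$ that the underlying Brownian motion stays in $\Omega$ on all of $[0,T_t]$ is contained in $\{\tau_\Omega^X>t\}$, namely that the $2s$-stable process lies in $\Omega$ at all skeleton times $\{T_u\}_{u\le t}$. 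This recovers $\tilde p^{(2)}_t\le\tilde p^{(1)}_t$ and so the inequality of Theorem~\ref{evcomp}; the lower bound amounts to controlling how much more the Brownian motion can excurse outside $\Omega$ between consecutive skeleton times.

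For convex $\Omega$ with the sharp constant $c=1/2$, I would use a Ba\~nuelos--Kulik type reflection argument exploiting the supporting hyperplanes of $\Omega$. The idea is that for a Brownian path starting in $\Omega$ and visiting $\Omega$ at the skeleton times, the reflection of any excursion exiting $\Omega$ across a suitable supporting hyperplane produces a coupled path in the complementary half-space; together with a careful accounting of the subordination jumps, this pairing yields the explicit factor $\tfrac12$. For general bounded $\Omega$ satisfying only the exterior cone condition, the reflection argument is replaced by quantitative exit estimates from the cone: the exterior cone condition makes every boundary point regular for the Dirichlet Laplacian and provides uniform control on the boundary decay of the killed Brownian heat kernel, yielding a strictly positive domain-dependent constant $c_{\Omega,s}$ in the analogous comparison.

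The main obstacle is the passage from kernel-level estimates to individual eigenvalue bounds. A pointwise kernel inequality $\tilde p^{(2)}_t\geq c\,\tilde p^{(1)}_t$ cannot hold uniformly in $t$, since the long-time asymptotics $\tilde p^{(j)}_t(x,y)\sim e^{-tE_1^{(j)}}\phi_1^{(j)}(x)\phi_1^{(j)}(y)$ combined with the strict inequality $E_1^{(1)}<E_1^{(2)}$ guaranteed by Theorem~\ref{evcomp} force the ratio to decay to zero. To extract eigenvalue comparisons uniform in $n$, I would appeal to the intrinsic ultracontractivity of both semigroups together with a comparison of their positive ground states, both of which vanish like $\dist(x,\partial\Omega)^s$ for $\Omega$ satisfying the exterior cone condition. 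This factors out the common boundary behavior and reduces the question to a comparison of truncated heat traces, from which the individual eigenvalue bounds follow by a Tauberian-type argument.
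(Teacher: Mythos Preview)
The paper does not give its own proof of Theorem~\ref{evcomplower}; it merely attributes the result to Chen--Song \cite{ChSo}. Your probabilistic set-up via subordination is indeed the framework underlying their argument, and you correctly identify the central obstruction: a pointwise bound $\tilde p^{(1)}_t\le C\,\tilde p^{(2)}_t$ uniform in $t$ is ruled out by the strict inequality $E_1(H_\Omega^{(s)})<E_1((-\Delta_\Omega)^s)$.

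The genuine gap is your proposed workaround. Intrinsic ultracontractivity together with a Tauberian argument cannot deliver the conclusion. Tauberian theorems convert heat-trace control into \emph{asymptotic} information on the counting function $N(\lambda)$; at best they yield $E_n^{(1)}\sim E_n^{(2)}$ as $n\to\infty$, which is already contained in Theorem~\ref{getour}, and not the uniform multiplicative inequality $E_n^{(1)}\ge c\,E_n^{(2)}$ valid for \emph{every} $n$, including small $n$. Factoring out the ground states via intrinsic ultracontractivity does not help: the same obstruction simply reappears for the next eigenvalue. Note also that even a pointwise Green-function bound $G^{(1)}(x,y)\le C\,G^{(2)}(x,y)$ would not by itself give the operator inequality $(H_\Omega^{(s)})^{-1}\le C\,((-\Delta_\Omega)^s)^{-1}$, since a nonnegative integral kernel need not define a positive semidefinite operator.

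What is actually needed is a comparison at the level of \emph{quadratic forms}, from which the min--max principle gives $E_n^{(2)}\le C\,E_n^{(1)}$ for all $n$ simultaneously. Using the representation
\[
\lambda^s=\frac{s}{\Gamma(1-s)}\int_0^\infty\bigl(1-e^{-t\lambda}\bigr)\,\frac{dt}{t^{1+s}}\,,
\]
the desired form inequality $(-\Delta_\Omega)^s\le C\,H_\Omega^{(s)}$ reduces to proving, for each fixed $t>0$ and every $u\in L^2(\Omega)$,
\[
\langle u,(I-e^{t\Delta_\Omega})u\rangle \;\le\; C\,\langle u,(I-e^{t\Delta})u\rangle\,,
\]
that is, a comparison between the killed and free \emph{Brownian} heat semigroups as quadratic forms on $L^2(\Omega)$. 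Your reflection heuristic for convex $\Omega$ is aimed at the right phenomenon, but it should be implemented at the level of the Laplacian heat kernel and then fed into this integral representation, rather than attempted directly for the stable heat kernel where the uniform-in-$t$ comparison is doomed.
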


We note that Theorem \ref{evcomplower} allows one to obtain lower bounds on $E_n(H_\Omega^{(s)})$ from lower bounds on $E_n(-\Delta_\Omega)$. For instance, one can show that for convex domains $E_1(H_\Omega^{(s)})$ is bounded from below by a constant times $r_{\text{in}}(\Omega)^{-2s}$ \cite{ChSo}. This gives weaker inequalities, however, than the direct approach in \cite{BLM,Me}.

%%%%%%%%%%%%%%%%%%%%%%%%%%%%%%%%%%%%%%%

\section{Eigenvalue asymptotics}

\subsection{Eigenvalue asymptotics for the fractional Laplacian}\label{sec:asymp}

From a (fractional analogue) of Rellich's compactness lemma we know that $H_\Omega^{(s)}$ has purely discrete spectrum when $\Omega\subset\R^d$ has finite measure. In this subsection we discuss the asymptotics of the eigenvalues $E_n(H_\Omega^{(s)})$ as $n\to\infty$. The basic result is due to Blumenthal and Getour \cite{BlGe} (see also \cite[Rem. 2.2]{BaKu08} and \cite{Ge}).

\begin{theorem}\label{getour}
Let $\Omega\subset\R^d$ be open with finite measure. Then
\begin{equation}
\label{eq:getour}
\lim_{n\to\infty} \frac{E_n(H_\Omega^{(s)})}{n^{2s/d}} = (2\pi)^{2s} \omega_d^{-2s/d} |\Omega|^{-2s/d}
\end{equation}
with $\omega_d = |\{\xi\in\R^d:\ |\xi|<1\}|$.
\end{theorem}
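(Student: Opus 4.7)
The plan is to pass to the heat-semigroup trace and invoke Karamata's Tauberian theorem. Define $Z_\Omega(t):=\Tr e^{-tH_\Omega^{(s)}}=\sum_n e^{-tE_n(H_\Omega^{(s)})}$ and $N(\mu):=\#\{n:E_n(H_\Omega^{(s)})<\mu\}$, so that $Z_\Omega(t)=\int_0^\infty e^{-t\mu}\,dN(\mu)$. Because $N$ is non-decreasing, Karamata's theorem converts an asymptotic $Z_\Omega(t)\sim A\,t^{-d/(2s)}$ as $t\to 0^+$ into $N(\mu)\sim A\,\mu^{d/(2s)}/\Gamma(1+\tfrac{d}{2s})$ as $\mu\to\infty$, which on inverting the counting function gives \eqref{eq:getour}. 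It therefore suffices to show
\[
\lim_{t\to 0^+} t^{d/(2s)} Z_\Omega(t) = (2\pi)^{-d}|\Omega|\int_{\R^d} e^{-|p|^{2s}}\,dp = \omega_d\,\Gamma(1+\tfrac{d}{2s})(2\pi)^{-d}|\Omega|,
\]
since the prefactor of $\mu^{d/(2s)}$ in $N(\mu)$ then becomes $(2\pi)^{-d}\omega_d|\Omega|$, which inverts to exactly the constant $(2\pi)^{2s}\omega_d^{-2s/d}|\Omega|^{-2s/d}$ appearing in the theorem.

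To establish the heat-trace asymptotic, I would write $Z_\Omega(t)=\int_\Omega p_t^\Omega(x,x)\,dx$ with $p_t^\Omega$ the integral kernel of $e^{-tH_\Omega^{(s)}}$, and compare with the free fractional heat kernel $p_t(x,y):=(2\pi)^{-d}\int_{\R^d} e^{ip\cdot(x-y)}e^{-t|p|^{2s}}\,dp$. The main analytic input is the domain monotonicity
\[
0\le p_t^\Omega(x,y)\le p_t(x,y),
\]
which one obtains from the fact that $\mathring H^s(\Omega)$ is a closed subspace of $H^s(\R^d)$ on which the free quadratic form coincides with the form of $H_\Omega^{(s)}$ (equivalently, from the identification of $e^{-tH_\Omega^{(s)}}$ with the isotropic $2s$-stable semigroup killed on $\R^d\setminus\Omega$). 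Since $p_t(x,x)=c_{d,s}\,t^{-d/(2s)}$ with $c_{d,s}:=(2\pi)^{-d}\int e^{-|p|^{2s}}\,dp$, this furnishes a time-independent, $\Omega$-integrable majorant $t^{d/(2s)}p_t^\Omega(x,x)\le c_{d,s}$. Combined with the complementary pointwise limit
\[
\lim_{t\to 0^+} t^{d/(2s)} p_t^\Omega(x,x) = c_{d,s} \qquad\text{for a.e.\ }x\in\Omega,
\]
dominated convergence delivers the desired trace asymptotic.

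The main obstacle is this interior pointwise limit. In the local case $s=1$ it is essentially immediate, since Brownian paths cannot reach $\partial\Omega$ before time $t\ll\dist(x,\partial\Omega)^2$; but for $0<s<1$ the $2s$-stable process makes arbitrarily large jumps, so the short-time behaviour at interior points cannot be inferred from local geometry alone. The quantitative control one needs is the exit-time bound $\mathbb{P}^x(\tau_{B_r(x)}<t)\le C_s\,t\,r^{-2s}$ for balls $B_r(x)\subset\Omega$, which is a direct consequence of the $t^{1/(2s)}$ self-similarity of the isotropic $2s$-stable process. Substituted into the Hunt-type identity
\[
p_t(x,x)-p_t^\Omega(x,x)=\mathbb{E}^x\!\left[\tau_\Omega<t\,;\,p_{t-\tau_\Omega}(X_{\tau_\Omega},x)\right],
\]
it produces a killing correction of order $tr^{-2s}\cdot t^{-d/(2s)}=o(t^{-d/(2s)})$ for each interior $x$, which is exactly the required pointwise convergence. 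This small-jump estimate is the one genuinely fractional input. An analytic substitute that avoids probability is to exhaust $\Omega$ from inside by disjoint finite unions of cubes $\bigsqcup_j Q_j\subset\Omega$, apply the domain monotonicity $p_t^\Omega(x,x)\ge p_t^{Q_j}(x,x)$ for $x\in Q_j$, and verify the Weyl asymptotic for a single cube by a direct Fourier calculation; the cube case, however, harbours the same analytic difficulty, so this route relocates rather than eliminates the core obstacle.
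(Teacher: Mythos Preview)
The paper does not prove Theorem~\ref{getour}; it attributes the result to Blumenthal--Getoor \cite{BlGe} (see also \cite{BaKu08,Ge}) and then records several equivalent reformulations. Your heat-trace-plus-Karamata strategy is precisely the classical route of \cite{BlGe}, and the upper bound via domain monotonicity $0\le p_t^\Omega\le p_t$ is exactly what the paper later writes down (see the lines preceding \eqref{eq:heatlocal}) as a consequence of the maximum principle. So your approach is correct in outline and aligns with the cited original.

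There is, however, one genuine gap in the Hunt-formula step. You claim the killing correction is ``of order $tr^{-2s}\cdot t^{-d/(2s)}$'', which tacitly bounds $p_{t-\tau_\Omega}(X_{\tau_\Omega},x)$ by a constant times $t^{-d/(2s)}$. But $p_{t-\tau_\Omega}(\cdot,\cdot)\le c_{d,s}(t-\tau_\Omega)^{-d/(2s)}$ diverges as $\tau_\Omega\uparrow t$, so the naive product bound fails. The standard repair is to split the expectation at $\tau_\Omega=t/2$: on $\{\tau_\Omega<t/2\}$ one has $p_{t-\tau_\Omega}\le c_{d,s}(t/2)^{-d/(2s)}$ and multiplies by $\mathbb P^x(\tau_{B_r(x)}<t)\le C_s\,t\,r^{-2s}$; on $\{t/2\le\tau_\Omega<t\}$ one instead uses that $X_{\tau_\Omega}\notin\Omega$ forces $|X_{\tau_\Omega}-x|\ge r$, together with the off-diagonal stable estimate $p_u(y,x)\le C\,u\,|y-x|^{-d-2s}$ (valid for all $u>0$), giving a contribution at most $C\,t\,r^{-d-2s}$. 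Both pieces are $o(t^{-d/(2s)})$ for fixed $r>0$, and your dominated-convergence argument then goes through. Your closing remark that the cube-exhaustion route ``relocates rather than eliminates the core obstacle'' is accurate: the same interior estimate is needed there as well.
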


Alternatively, one can write \eqref{eq:getour} as
\begin{equation}
\label{eq:getouralt}
\lim_{\mu\to\infty} \mu^{-d/(2s)} N(\mu,H_\Omega^{(s)}) = (2\pi)^{-d} \omega_d |\Omega| \,,
\end{equation}
where for an arbitrary self-adjoint operator $A$, which is bounded from below, we set $N(\mu,A)=\#\{ n:\ E_n(A)<\mu\}$. If $A$ has discrete spectrum in $(-\infty,\mu)$, then $N(\mu,A)$ denotes the total number of eigenvalues below $\mu$, counting multiplicities.

For later purposes we record that \eqref{eq:getour} implies
\begin{equation}
\label{eq:getoursum}
\lim_{N\to\infty} N^{-1-2s/d} \sum_{n=1}^N E_n(H_\Omega^{(s)}) = \frac{d}{d+2s} (2\pi)^{2s} \omega_d^{-2s/d} |\Omega|^{-2s/d} \,.
\end{equation}
We also note that \eqref{eq:getouralt} and integration in $\mu$ shows that, for any $\gamma>0$,
\begin{equation}
\label{eq:getourgamma}
\lim_{\mu\to\infty} \mu^{-\gamma-d/(2s)} \tr\left(H_\Omega^{(s)}-\mu\right)_-^\gamma = L_{\gamma,d,s}^\cl |\Omega| \,,
\end{equation}
where
$$
\tr (H_\Omega^{(s)}-\mu)_-^\gamma = \sum_n \left( E_n(H_\Omega^{(s)}) - \mu \right)_-^\gamma = \gamma \int_0^\infty N(\mu,H_\Omega^{(s)}) \mu^{\gamma-1}\,d\mu
$$
and
\begin{equation}
\label{eq:weylconst}
L_{\gamma,d,s}^\cl := \frac{1}{(2\pi)^d} \int_{\R^d} \left( |p|^
{2s}-1 \right)_-^\gamma dp = \frac{\omega_d}{(2\pi)^d} \frac{\Gamma(\gamma+1)\,\Gamma(\frac{d}{2s}+1)}{\Gamma(\gamma+\frac{d}{2s}+1)} \,.
\end{equation}

\medskip

A classical result of Weyl states that
$$
\lim_{\mu\to\infty} \mu^{-d/2} N(\mu,-\Delta_\Omega) = (2\pi)^{-d} \omega_d \,|\Omega| \,,
$$
and therefore, by the spectral theorem,
$$
\lim_{\mu\to\infty} \mu^{-d/(2s)} N(\mu,(-\Delta_\Omega)^s) = \lim_{\mu'\to\infty} (\mu')^{-d/2} N(\mu',-\Delta_\Omega) = (2\pi)^{-d} \omega_d\, |\Omega| \,.
$$
Comparing this with \eqref{eq:getour} we see that $E_n(H_\Omega^{(s)})$ and $E_n(-\Delta_\Omega^s) = \left( E_n(-\Delta_\Omega)\right)^s$ coincide to leading order as $n\to\infty$. In the following we will be interested in subleading corrections to the asymptotics in Theorem \ref{getour}.

We begin with the case $d=1$. After a translation and a dilation we can assume without loss of generality that $\Omega =(-1,1)$.

\begin{theorem}\label{weylsecond1d}
Let $\Omega=(-1,1)\subset\R$. Then
\begin{equation}
\label{eq:weylsecond1d}
E_n(H_\Omega^{(s)}) = \left( \frac{n\pi}{2} - \frac{(1-s)\pi}{4} \right)^{2s} + O(n^{-1})
\end{equation}
\end{theorem}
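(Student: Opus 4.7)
My plan is to reduce to the half-line problem, which can be solved explicitly by Wiener--Hopf factorization, and then use those solutions to build approximate eigenfunctions for the min-max principle on the interval.

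For each $\xi > 0$, I would first construct the bounded function $F_\xi$ on $\R$ with $F_\xi \equiv 0$ on $(-\infty, 0)$ satisfying $(-\Delta)^s F_\xi = \xi^{2s} F_\xi$ on $(0,\infty)$. Wiener--Hopf factorization of the symbol $|p|^{2s} - \xi^{2s}$ into factors analytic in the upper and lower half-planes yields the explicit representation
\begin{equation*}
F_\xi(x) = \sin(\xi x + \eta_s) - G_\xi(x), \qquad x > 0,
\end{equation*}
where $\eta_s := (1-s)\pi/4$ is obtained as the argument of the Wiener--Hopf factor on the real axis, and $G_\xi$ is a Laplace transform satisfying $|G_\xi(x)| \lesssim (1 + \xi x)^{-s}$ uniformly in $\xi$. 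This phase shift $\eta_s$ is precisely the source of the $(1-s)\pi/4$ correction in the theorem.

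With $\xi_n := n\pi/2 - \eta_s$ I would then build trial functions for the interval problem by combining half-line solutions emanating from each endpoint,
\begin{equation*}
\tilde\phi_n^\pm(x) := F_{\xi_n}(1+x) \pm F_{\xi_n}(1-x),
\end{equation*}
regarded initially as functions on $\R$. Because $F_{\xi_n}(1+\cdot)$ satisfies the eigenvalue equation on $(-1,\infty)$ and $F_{\xi_n}(1-\cdot)$ on $(-\infty,1)$, the function $\tilde\phi_n^\pm$ is an exact eigenfunction of $(-\Delta)^s$ with eigenvalue $\xi_n^{2s}$ in the interior $(-1,1)$; its failure to lie in $\mathring H^s((-1,1))$ stems only from the non-vanishing tails of $F_{\xi_n}(1\pm\, \cdot)$ on $|x| > 1$. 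Truncating or correcting these tails produces admissible trial functions whose Rayleigh quotient equals $\xi_n^{2s} + O(n^{-1})$; the specific choice of $\xi_n$ is the matching condition that makes distinct trial functions nearly orthogonal and cancels the leading boundary contribution. The upper bound $E_n \leq (n\pi/2 - \eta_s)^{2s} + O(n^{-1})$ then follows by applying the Courant--Fischer principle to the span of the first $n$ such trial functions, ordered by parity so that the even/odd sectors interleave correctly.

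The main obstacle is the matching lower bound $E_n \geq (n\pi/2 - \eta_s)^{2s} + O(n^{-1})$, which requires ruling out spurious eigenvalues missed by the trial family. For this I would use the dual max-min characterization, taking as codimension-$(n-1)$ constraint the orthogonal complement of $\operatorname{span}\{\tilde\phi_k^\pm : k < n\}$ and expanding an arbitrary admissible test function in a fractional-Fourier-type series indexed by the $\xi_m$'s. Obtaining the sharp error $O(n^{-1})$ (rather than the cruder first-order perturbation bound $O(\xi_n^{2s-1})$) demands Temple--Kato-type second-order perturbation estimates together with uniform bilinear control of the Wiener--Hopf remainder $G_\xi$; an additional technical subtlety is the qualitative change in the boundary regularity of $F_\xi$ at $x = 0$ as $s$ crosses $1/2$, which must be handled uniformly in $s$.
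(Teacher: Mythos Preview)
The paper does not prove this theorem; it merely attributes the result to Kwa\'snicki \cite{Kw2} (extending \cite{KKMS} for $s=1/2$) and remarks that ``a key role is played by a detailed analysis of the half line problem \cite{Kw}.'' Your outline is precisely the strategy of those references: the Wiener--Hopf construction of the half-line generalized eigenfunctions $F_\xi$ with phase shift $\eta_s=(1-s)\pi/4$ is the content of \cite{Kw}, and the superposition $F_{\xi_n}(1+x)\pm F_{\xi_n}(1-x)$ together with min--max arguments for both bounds is the scheme of \cite{Kw2}. So your approach is correct and coincides with the one the paper points to, though the paper itself contains no proof for you to compare against in detail.
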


This theorem is due to Kwa\'snicki \cite{Kw2} and generalizes an earlier result \cite{KKMS} for $s=1/2$. A key role is played by a detailed analysis of the half line problem \cite{Kw}.

Asymptotics \eqref{eq:weylsecond1d} are remarkably precise. For $s>1/2$ they give the first three terms as $n\to\infty$. We also see that the $\liminf$ and the $\limsup$ of $N(\mu,H_\Omega^{(s)})-\pi^{-1}|\Omega|\mu^{1/(2s)}$ as $\mu\to\infty$ are finite, but do not coincide. (In fact, the $\limsup$ is positive for $s\in(0,1)$, which shows that the analogue of P\'olya's conjecture fails in the fractional case. This was first observed in \cite{KwLaSi}.)

\medskip

We now turn to the higher-dimensional case. The authors of \cite{BaKuSi} posed the problem to prove that, under suitable assumptions on $\Omega$, the quantity
$$
n^{-(2s-1)/d} \left( E_n(H_\Omega^{(s)}) - n^{2s/d} (2\pi)^{2s} \omega_d^{-2s/d} |\Omega|^{-2s/d} \right)
$$
has a limit. For $s=1$ this is a celebrated result by Ivrii \cite{Iv} which holds under the assumption that the set of periodic billiards has measure zero. In fact, after the first version of this review was submitted, Ivrii \cite{Iv2} announced a solution of the above problem for $s\in(0,1)$ under the same assumption.

The following theorem from \cite{FrGe} verifies the existence of a limit in the Ces\`aro sense, that is, the quantity
\begin{equation}
\label{eq:getoursumsecondlim}
N^{-(2s-1)/d} \left( N^{-1} \sum_{n=1}^N E_n(H_\Omega^{(s)}) - \frac{d}{d+2s} N^{2s/d} (2\pi)^{2s} \omega_d^{-2s/d} |\Omega|^{-2s/d} \right)
\end{equation}
has a limit. Just like \eqref{eq:getour} is equivalent to \eqref{eq:getouralt} and \eqref{eq:getoursum} is equivalent to \eqref{eq:getourgamma} with $\gamma=1$, the existence of the limit of \eqref{eq:getoursumsecondlim} is equivalent to the existence of the limit of 
\begin{equation}
\label{eq:getoursumaltsecondlim}
\mu^{-1-(d-1)/(2s)} \left( \tr (H_\Omega^{(s)}-\mu)_- -\mu^{1+d/(2s)} L_{1,d,s}^\cl\, |\Omega| \right).
\end{equation}
(These equivalences are elementary facts about sequences; see, e.g., \cite[Lemma 21]{FrGe}.) The advantage of \eqref{eq:getouralt}, \eqref{eq:getourgamma} and \eqref{eq:getoursumaltsecondlim} over \eqref{eq:getour}, \eqref{eq:getoursum} and \eqref{eq:getoursumsecondlim}, respectively, is that disjoint subsets of $\Omega$ have asymptotically an additive influence on the asymptotics, which allows for localization techniques.

The main result from \cite{FrGe} is

\begin{theorem}
\label{getoursumsecond}
For any $d\geq 1$ and $0<s<1$ there is a constant $B_{d,s}^\cl >0$ such that for any bounded domain $\Omega\subset\R^d$ with $C^1$ boundary,
\begin{equation}
\label{eq:getoursumsecond}
\lim_{\mu\to\infty} \mu^{-1-(d-1)/(2s)} \left( \tr (H_\Omega^{(s)} -\mu)_-  - \mu^{1+d/(2s)} L_{1,d,s}^\cl \, |\Omega| \right) = - B_{d,s}^\cl \, \sigma(\partial \Omega) \,.
\end{equation}
\end{theorem}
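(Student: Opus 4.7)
The plan is to prove \eqref{eq:getoursumsecond} by a multiscale localization that separates the bulk Weyl contribution from a boundary term extracted from a half-space model. I would cover $\Omega$ by disjoint cubes of sidelength $\ell$ with $\mu^{-1/(2s)} \ll \ell \ll 1$, distinguishing "bulk cubes" lying in a region at distance $\geq \ell$ from $\partial\Omega$ from "boundary cubes" in a tubular neighborhood of $\partial\Omega$ of thickness comparable to $\ell$. The bulk cubes will produce the volume term in \eqref{eq:getoursumsecond} via the known leading-order Weyl asymptotics from Theorem \ref{getour} and \eqref{eq:getourgamma}; the boundary cubes will produce the surface term after a flattening of $\partial\Omega$.

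To reduce the trace on $\Omega$ to a sum of traces on the cubes I would use a smooth partition of unity $\{\phi_i^2\}$ subordinate to the covering together with the identity
$$
H_\Omega^{(s)} = \sum_i \phi_i H_\Omega^{(s)} \phi_i - \frac{1}{2} \sum_i \bigl[\phi_i,[\phi_i,H_\Omega^{(s)}]\bigr],
$$
valid for any self-adjoint operator when $\sum\phi_i^2 = 1$. The integral kernel of the double commutator is pointwise dominated by $a_{d,s}(\phi_i(x)-\phi_i(y))^2|x-y|^{-d-2s}$, and summing over $i$ yields an operator-norm bound $\|\sum_i [\phi_i,[\phi_i,(-\Delta)^s]]\| = O(\ell^{-2s})$ provided $s<1$. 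Combined with the variational inequality $\tr(A+B-\mu)_- \geq \tr(A-\mu)_- - \|B\|\, N(\mu, A)$ and \eqref{eq:getouralt}, this bounds the commutator contribution by $O(\ell^{-2s}\mu^{d/(2s)})\cdot|\Omega|$, which is $o(\mu^{1+(d-1)/(2s)})$ provided $\ell$ is chosen as a suitable power of $\mu^{-1}$.

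Within each boundary cube $Q$, I would straighten $\partial\Omega$ by a $C^1$ diffeomorphism that is $C^1$-close to a rigid motion as $\ell \to 0$; a variational comparison then shows $\tr(H^{(s)}_{\Omega\cap Q}-\mu)_-$ agrees with $\tr(H^{(s)}_{\R^d_+\cap Q'}-\mu)_-$ for a corresponding half-space piece $Q'$, up to $o(\ell^{d-1}\mu^{1+(d-1)/(2s)})$. Summing over boundary cubes produces a Riemann sum that converges to $\sigma(\partial\Omega)$ times a universal half-space constant, identified as $B_{d,s}^\cl$ via the asymptotic
$$
\lim_{\mu\to\infty} \mu^{-1-(d-1)/(2s)}\Bigl( \tr(H^{(s)}_{\R^d_+\cap Q}-\mu)_- - L_{1,d,s}^\cl |\R^d_+\cap Q|\mu^{1+d/(2s)} \Bigr) = -B_{d,s}^\cl\, |Q\cap\partial\R^d_+|.
$$
This half-space asymptotic is proved by Fourier transforming in the $d-1$ tangential directions to decompose $H^{(s)}_{\R^d_+}$ as a direct integral of one-dimensional half-line operators with symbol $(|p'|^2 + p_d^2)^s$ indexed by the tangential momentum $p' \in \R^{d-1}$. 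Each fiber has a well-understood subleading correction coming from the half-line analysis in \cite{Kw,Kw2}, and integrating the resulting defect density over $p'$ yields an explicit, finite, positive constant $B_{d,s}^\cl$.

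The main obstacle is the nonlocality of $(-\Delta)^s$: unlike in the local case one cannot exactly split the quadratic form over disjoint cubes, and long-range interactions between well-separated pieces of $\Omega$ must be absorbed into the commutator and diffeomorphism errors. Keeping these errors genuinely below the target order $\mu^{1+(d-1)/(2s)}$ requires a careful optimization of the localization scale $\ell$ and uniform derivative bounds on the flattening map across all boundary cubes, which is precisely where the $C^1$ hypothesis on $\partial\Omega$ is used; under weaker regularity the straightening would introduce an error of surface order and the argument would break down.
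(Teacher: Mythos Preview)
The paper is a review and does not itself prove Theorem~\ref{getoursumsecond}; it refers to \cite{FrGe} (originally for $C^{1,\alpha}$ boundaries with a quantitative remainder, extended to $C^1$ via the method of \cite{FrGe1}). Your overall architecture --- localize, separate bulk from boundary, compare boundary pieces to a half-space model, and analyze the half-space by Fourier transform in the tangential variables to reduce to one-dimensional half-line operators with symbol $(|p'|^2+p_d^2)^s$ --- is indeed the scheme of \cite{FrGe}.

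There is, however, a genuine gap in your error budget. With a \emph{single} scale $\ell$ the IMS localization error has operator norm $O(\ell^{-2s})$, and you then bound its contribution by $O(\ell^{-2s}\mu^{d/(2s)})$ and assert this is $o(\mu^{1+(d-1)/(2s)})$ for a suitable $\ell=\mu^{-\beta}$. But the ratio of these two quantities is $\ell^{-2s}\mu^{(1-2s)/(2s)}$, so you need $\ell\gg\mu^{(1-2s)/(4s^2)}$. For $s>1/2$ this exponent is negative and the constraint is compatible with $\ell\to 0$ (needed for the $C^1$ flattening); for $s\le 1/2$ it is not, and no choice of a single $\ell$ works. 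The proof in \cite{FrGe} avoids this by a \emph{multiscale} partition in which $\ell(u)$ is proportional to $\dist(u,\partial\Omega)$ in the interior and shrinks to a fixed small scale only in a boundary layer of thickness $o(1)$. The localization error then lives on a set of volume $O(\ell_{\mathrm{bdry}})$ rather than $O(|\Omega|)$, which recovers precisely the missing power and closes the estimate for all $0<s<1$.

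A second, more minor, point: after IMS you land on $\phi_i H_\Omega^{(s)}\phi_i$, not on $H_{\Omega\cap Q_i}^{(s)}$, so writing ``$\tr(H^{(s)}_{\Omega\cap Q}-\mu)_-$ agrees with $\tr(H^{(s)}_{\R^d_+\cap Q'}-\mu)_-$'' conflates two different objects. What is actually compared in \cite{FrGe} is $\tr\bigl(\phi_i(H_\Omega^{(s)}-\mu)\phi_i\bigr)_-$ (or equivalently $\tr\,\phi_i(H_\Omega^{(s)}-\mu)_-\phi_i$ via a trial-density argument) with the corresponding expression for the half-space model; the difference is controlled not by a change of variables alone but by exploiting the polynomial off-diagonal decay of the kernel $|x-y|^{-d-2s}$ to show that the parts of $\Omega$ outside the support of $\phi_i$ contribute negligibly. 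Your last paragraph flags the nonlocality but does not supply this decay estimate, which is where the work lies.
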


Here $\sigma(\partial\Omega)$ denotes the surface measure of $\partial\Omega$. In \cite{FrGe} this is stated for domains with $C^{1,\alpha}$ boundary, $0<\alpha\leq 1$, and a quantitative remainder whose order depends on $\alpha$. The same argument as in \cite{FrGe1}, however, yields the result for $C^1$ boundaries with a $o(1)$ remainder.

In \cite{FrGe} we obtain an expression for $B_{d,s}^\cl$ which is explicit enough to deduce that it is different (in fact, smaller) than the corresponding expression for the fractional power of the Dirichlet Laplacian. In order to state this precisely, we recall that there is a constant $\tilde B_{d,s}^\cl >0$ such that for any bounded domain $\Omega\subset\R^d$ with $C^1$ boundary,
\begin{equation*}
\lim_{\mu\to\infty} \mu^{-1-(d-1)/(2s)} \left( \tr ((-\Delta_\Omega)^s -\mu)_-  - \mu^{1+d/(2s)} L_{1,d,s}^\cl\, |\Omega| \right) = - \tilde B_{d,s}^\cl \, \sigma(\partial \Omega) \,;
\end{equation*}
see, e.g., \cite{FrGe0} for a proof for domains with $C^{1,\alpha}$ boundary, $0<\alpha\leq 1$, which again can be modified to yield the result for $C^1$ boundaries. We prove \cite[Sec. 6.4]{FrGe}
$$
B_{d,s}^\cl <\tilde B_{d,s}^\cl
$$
and deduce

\begin{corollary}\label{getoursumsecondcor}
For any bounded domain $\Omega\subset\R^d$ with $C^1$ boundary,
\begin{equation*}
\lim_{\mu\to\infty} \mu^{-1-(d-1)/(2s)} \left( \tr (H_\Omega^{(s)} -\mu)_-  - \tr ((-\Delta_\Omega)^s -\mu)_- \right) = - \!\left( B_{d,s}^\cl -\tilde B_{d,s}^\cl\right) \sigma(\partial \Omega) >0 \,.
\end{equation*}
\end{corollary}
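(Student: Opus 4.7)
The plan is almost entirely a matter of subtracting two known asymptotic expansions; the substantive work is the strict inequality $B_{d,s}^\cl<\tilde B_{d,s}^\cl$, which is quoted here from \cite[Sec. 6.4]{FrGe} and is the real content. So I would split the argument into two pieces: first, a purely formal derivation of the limit from the two separate Weyl-with-surface-correction expansions already in hand, and second, a discussion (deferred to \cite{FrGe}) of why the surface constants are strictly ordered.

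For the formal step, I would simply apply Theorem~\ref{getoursumsecond} to $H_\Omega^{(s)}$ and the analogous statement recalled just above (from \cite{FrGe0}) to $(-\Delta_\Omega)^s$, both valid for bounded domains with $C^1$ boundary. The key observation is that the leading volume coefficient $L_{1,d,s}^\cl$ is the \emph{same} in the two expansions, because it only depends on the symbol $|p|^{2s}$ and not on which realization of the fractional Laplacian we use. Therefore
$$
\tr(H_\Omega^{(s)}-\mu)_- - \tr((-\Delta_\Omega)^s-\mu)_-
= \left[ \tr(H_\Omega^{(s)}-\mu)_- - \mu^{1+d/(2s)} L_{1,d,s}^\cl |\Omega| \right]
- \left[ \tr((-\Delta_\Omega)^s-\mu)_- - \mu^{1+d/(2s)} L_{1,d,s}^\cl |\Omega| \right],
$$
and multiplying by $\mu^{-1-(d-1)/(2s)}$ and sending $\mu\to\infty$, each bracket has a limit by the two cited results. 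The limit equals $-B_{d,s}^\cl\,\sigma(\partial\Omega) - (-\tilde B_{d,s}^\cl\,\sigma(\partial\Omega)) = -(B_{d,s}^\cl - \tilde B_{d,s}^\cl)\,\sigma(\partial\Omega)$, as asserted.

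The strict positivity of the right-hand side then reduces to the inequality $B_{d,s}^\cl < \tilde B_{d,s}^\cl$ between two explicit constants. This is the hard step, and it is precisely what is done in \cite[Sec. 6.4]{FrGe}: one needs explicit enough integral representations of both $B_{d,s}^\cl$ and $\tilde B_{d,s}^\cl$ (coming from the half-space model problems for $H_{\R^d_+}^{(s)}$ and $(-\Delta_{\R^d_+})^s$ respectively) to compare them quantitatively. The main obstacle is not the corollary itself, but verifying this strict ordering of model surface constants; operator-theoretically it is the quantitative counterpart of Theorem~\ref{evcomp}, which says $H_\Omega^{(s)}\leq (-\Delta_\Omega)^s$ strictly (the kernel mismatch at the boundary makes $H^{(s)}$ feel the boundary more, pushing more eigenvalue mass below $\mu$, hence a larger negative surface correction, hence $B_{d,s}^\cl<\tilde B_{d,s}^\cl$). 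Assuming that inequality, no further work is needed: $\sigma(\partial\Omega)>0$ for a nonempty bounded $C^1$ domain, so the limit is strictly positive.
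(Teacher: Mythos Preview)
Your proposal is correct and matches the paper's approach exactly: the corollary is deduced by subtracting the two surface-corrected Weyl expansions (Theorem~\ref{getoursumsecond} and its analogue for $(-\Delta_\Omega)^s$ from \cite{FrGe0}), with the strict inequality $B_{d,s}^\cl<\tilde B_{d,s}^\cl$ imported from \cite[Sec.~6.4]{FrGe}. The only quibble is phrasing in your heuristic aside: since $H_\Omega^{(s)}\leq(-\Delta_\Omega)^s$ gives a \emph{larger} value of $\tr(H_\Omega^{(s)}-\mu)_-$, the surface term subtracts \emph{less}, i.e.\ the correction is smaller in magnitude, which is what $B_{d,s}^\cl<\tilde B_{d,s}^\cl$ says --- your wording ``larger negative surface correction'' points the wrong way, though your conclusion is right.
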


Theorem \ref{getoursumsecond} implies via integration that
\begin{equation}
\label{eq:getourheatsecond}
\lim_{t\to 0} t^{(d-1)/(2s)} \left( \tr e^{-tH_\Omega^{(s)}} - t^{-d/(2s)} \frac{\omega_d\,\Gamma(1+\frac{d}{2s})}{(2\pi)^d} \,|\Omega| \right) = - \Gamma(2+\tfrac{d-1}{2s})\, B_{d,s}^\cl \, \sigma(\partial\Omega) \,.
\end{equation}
(This is essentially the argument that convergence in Ces\`aro sense implies convergence in Abel sense.) Asymptotics \eqref{eq:getourheatsecond} are, in fact, even true for $\Omega$ with Lipschitz boundary, as had earlier been shown in \cite{BaKuSi}. This extends the result from \cite{Br} for $s=1$ to the fractional case. See also \cite{BaKu08} for remainder terms in \eqref{eq:getourheatsecond} under stronger regularity assumptions on the boundary.

It seems to be unknown whether Theorem \ref{getoursumsecond} remains true for Lipschitz domains.

Asymptotics like \eqref{eq:getourheatsecond} have been shown for more general non-local operators, see, e.g., \cite{BaMiNa,PaSo,BoSi,Go}.

%%%%%%%%%%%%%%%%%%%%%%%%%%%%%%%%%%%%%%%%%%%%%%%%

\subsection{Eigenvalue asymptotics for fractional Schr\"odinger operators}

The analogue of Theorem \ref{getour} for fractional Schr\"odinger operators is

\begin{theorem}\label{weyl}
Let $0<s<1$ and let $V$ be a continuous function on $\R^d$ with compact support. Then
\begin{equation}
\label{eq:weyl}
\lim_{\alpha\to\infty} \alpha^{-d/(2s)} N((-\Delta)^s+\alpha V) = (2\pi)^{-d} \omega_d \int_{\R^d} V_-^{d/(2s)}\,dx \,.
\end{equation}
\end{theorem}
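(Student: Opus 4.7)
The plan is to sandwich $(-\Delta)^s + \alpha V$ between Dirichlet and ``regional'' (Neumann-type) fractional Laplacians on a grid of shrinking cubes, apply the Weyl asymptotic of Theorem \ref{getour} cube-by-cube, and pass to a Riemann integral using the continuity of $V$. Setting $h = \alpha^{-1/(2s)}$ exhibits this as a standard semiclassical limit, but it is cleanest to keep $\alpha$ as the parameter.

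For the lower bound, fix $\ell > 0$ and choose a finite disjoint family of open cubes $\{Q_j\}$ of side $\ell$ on which $v_j^+ := \sup_{Q_j} V < 0$; such cubes exhaust the open set $\{V<0\}$ as $\ell \to 0$. Any eigenfunction $\psi$ of $H^{(s)}_{Q_j}$ with eigenvalue $\lambda < -\alpha v_j^+$, extended by zero, lies in $\mathring H^s(Q_j) \subset H^s(\R^d)$ and satisfies
\[
\int_{\R^d} |p|^{2s} |\hat\psi(p)|^2\, dp + \alpha \int_{\R^d} V|\psi|^2\, dx \leq (\lambda + \alpha v_j^+) \|\psi\|^2 < 0 .
\]
Test functions from distinct cubes are orthogonal, so the variational principle gives $N((-\Delta)^s + \alpha V) \geq \sum_j N(-\alpha v_j^+, H^{(s)}_{Q_j})$. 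Dividing by $\alpha^{d/(2s)}$, invoking \eqref{eq:getouralt} for each $j$ as $\alpha \to \infty$, and then sending $\ell \to 0$, the resulting Riemann sum converges to $(2\pi)^{-d} \omega_d \int V_-^{d/(2s)} dx$ by uniform continuity of $V$ on its compact support.

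For the matching upper bound I would partition $\R^d$ into cubes $\{Q_j\}$ of side $\ell$ and exploit the identity
\[
a_{d,s} \iint_{\R^d \times \R^d} \frac{|\psi(x) - \psi(y)|^2}{|x-y|^{d+2s}} dx\, dy = \sum_j a^N_{Q_j}[\psi|_{Q_j}] + 2 a_{d,s} \sum_{j<k} \iint_{Q_j \times Q_k} \frac{|\psi(x) - \psi(y)|^2}{|x-y|^{d+2s}} dx\, dy ,
\]
where $a^N_{Q_j}[u] := a_{d,s} \iint_{Q_j \times Q_j} |u(x)-u(y)|^2 |x-y|^{-d-2s} dx\, dy$ is the intrinsic Gagliardo form on $Q_j$, generating a self-adjoint operator $H^{N,(s)}_{Q_j}$ in $L^2(Q_j)$. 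Dropping the non-negative cross terms yields $(-\Delta)^s \geq \bigoplus_j H^{N,(s)}_{Q_j}$, whence $N((-\Delta)^s + \alpha V) \leq \sum_j N(-\alpha v_j^-, H^{N,(s)}_{Q_j})$ with $v_j^- := \inf_{Q_j} V$. The main obstacle is that Theorem \ref{getour} concerns only the \emph{Dirichlet} fractional Laplacian, and the inequality $H^{N,(s)}_{Q_j} \leq H^{(s)}_{Q_j}$ yields the wrong direction for the counting function. One must independently establish a companion Weyl asymptotic $\mu^{-d/(2s)} N(\mu, H^{N,(s)}_{Q_j}) \to (2\pi)^{-d} \omega_d |Q_j|$ — for instance via a secondary fine-grid Dirichlet bracketing inside each $Q_j$, a heat-kernel comparison, or (most robustly) by replacing Neumann bracketing altogether with a coherent-states / Berezin--Lieb estimate on Gaussian wave packets localized in phase space. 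The latter route directly produces a matching upper bound on $\tr((-\Delta)^s + \alpha V)_-$, which is converted into a bound on $N((-\Delta)^s + \alpha V)$ by a standard cutoff argument. A final Riemann passage $\ell \to 0$ then closes the proof.
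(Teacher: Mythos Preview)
The paper does not actually prove this theorem; it is stated as the Schr\"odinger-operator analogue of Theorem~\ref{getour} and treated as known, with only the extension to rougher potentials (via Theorem~\ref{lt} and a Ky-Fan density argument) sketched later. So there is no paper's proof to compare against, and your bracketing outline is a reasonable way to fill the gap.

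There is, however, a genuine flaw in your lower bound that is specific to the non-locality of $(-\Delta)^s$. You build an $L^2$-orthogonal family of trial functions supported in disjoint cubes, each giving negative quadratic form, and invoke the variational principle. But Glazman's lemma requires the form to be negative on the whole \emph{span}, not just on the basis vectors. For $-\Delta$ this is free because $\langle \nabla\psi_j,\nabla\psi_k\rangle=0$ when the supports are disjoint; for $(-\Delta)^s$ one computes from the Gagliardo representation
\[
\langle (-\Delta)^{s/2}\psi_j,\,(-\Delta)^{s/2}\psi_k\rangle \;=\; -2a_{d,s} \iint_{Q_j\times Q_k}\frac{\psi_j(x)\,\overline{\psi_k(y)}}{|x-y|^{d+2s}}\,dx\,dy\,,
\]
which neither vanishes nor has a sign. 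The repair is standard: shrink the cubes so that distinct $Q_j$ are separated by some $\delta>0$, and count only eigenvalues of $H^{(s)}_{Q_j}$ below $\alpha|v_j^+|-\eta$ for a fixed $\eta>0$. Then the diagonal blocks of the restricted form are $\leq -\eta$, while the off-diagonal part is a bounded integral operator on $L^2(\bigcup_j Q_j)$ with norm $\leq C(\ell,\delta)$ independent of $\alpha$; taking $\eta>C(\ell,\delta)$ forces negative definiteness on the span. Since the fixed shift $\eta$ disappears in the limit $\alpha\to\infty$ of \eqref{eq:getouralt}, your Riemann-sum passage then goes through after sending $\ell,\delta\to 0$.

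For the upper bound you already flag the real difficulty, and you are right that of your three suggestions the coherent-state/Berezin--Lieb route is the one that actually closes: the other two (fine-grid Dirichlet bracketing inside $Q_j$, or a heat-kernel comparison) still require an \emph{upper} Weyl bound for the regional operator $H^{N,(s)}_{Q_j}$, which is not supplied by anything in this paper.
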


Similarly to \eqref{eq:weyl} one can show that for any $\gamma>0$,
\begin{equation}
\label{eq:weylgamma}
\lim_{\alpha\to\infty} \alpha^{-\gamma-d/(2s)} \tr((-\Delta)^s+\alpha V)_-^\gamma = L_{\gamma,d,s}^\cl \int_{\R^d} V_-^{\gamma+d/(2s)}\,dx
\end{equation}
with $L_{\gamma,d,s}^\cl$ from \eqref{eq:weylconst}. The assumptions on $V$ for \eqref{eq:weyl} and \eqref{eq:weylgamma} to hold can be relaxed. In particular, for $d\geq 2$, as well as for $d=1$ and $0<s<1/2$, one can show that the asymptotics hold under the sole assumption $V_-\in L^{\gamma+d/(2s)}$. This will be explained after Theorem \ref{lt}. The case $d=1$ and $1/2\leq s<1$ is more subtle. In analogy with \cite{BiLa,NaSo} one might wonder whether there are $V\in L^{1/(2s)}$ for which $N((-\Delta)^s+\alpha V)$ grows faster than $\alpha^{1/(2s)}$ or like $\alpha^{1/(2s)}$ but with an asymptotic constant strictly larger than $\int_\R V_-^{1/(2s)}\,dx$. Apparently this question has not been studied.

We are also not aware of sharp remainder estimates or subleading terms in \eqref{eq:weyl} and \eqref{eq:weylgamma}.  Note that, due to the non-smoothness of $p\mapsto|p|^{2s}$ at $p=0$, the operator $(-h^2\Delta)^s+V$ is not an admissible operator in the sense of \cite{HeRo}. For a remainder bound for the massive analogue of \eqref{eq:weylgamma} with $\gamma=1/2$ we refer to \cite{SSS}.

%%%%%%%%%%%%%%%%%%%%%%%%%%%%%%%%%%%%%%%

\section{Bounds on sums of eigenvalues}\label{sec:unif}

\subsection{Berezin--Li--Yau inequalities}

In this subsection we discuss bounds on sums of eigenvalues of $H_\Omega^{(s)}$. The bounds in the following theorem are called Berezin--Li--Yau inequalities since they generalize the corresponding bounds for $s=1$ \cite{Be,LY} to the fractional case.

\begin{theorem}\label{bly}
Let $\Omega\subset\R^d$ be an open set of finite measure. Then for any $\mu>0$,
\begin{equation}
\label{eq:blyb}
\sum_n \left( E_n(H_\Omega^{(s)}) - \mu \right)_- \leq \mu^{1+d/(2s)} L_{1,d,s}^\cl\, |\Omega|
\end{equation}
and, equivalently, for any $N\in\N$,
\begin{equation}
\label{eq:blyly}
\sum_{n=1}^N E_n(H_\Omega^{(s)}) \geq \frac{d}{d+2s} (2\pi)^{2s} \omega_d^{-2s/d} |\Omega|^{-2s/d} N^{1+2s/d} \,.
\end{equation}
\end{theorem}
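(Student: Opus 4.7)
The plan is to prove the Berezin-form inequality \eqref{eq:blyb} directly by a Fourier-side argument and then derive \eqref{eq:blyly} from it by Legendre duality. Since $\Omega$ has finite measure, $H_\Omega^{(s)}$ has purely discrete spectrum (by the fractional Rellich compactness recalled just before Theorem \ref{getour}), so the $E_n$ are genuine eigenvalues; let $\{\psi_n\}$ denote the corresponding $L^2(\Omega)$-orthonormal eigenfunctions, each regarded as an element of $L^2(\R^d)$ by zero-extension outside $\Omega$.

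By the very definition of $H_\Omega^{(s)}$,
$$
E_n = \int_{\R^d} |p|^{2s}\, |\hat\psi_n(p)|^2\, dp,
$$
so
$$
\sum_n (\mu - E_n)_+ = \sum_{E_n<\mu} \int_{\R^d} (\mu - |p|^{2s})\, |\hat\psi_n(p)|^2\, dp.
$$
Two steps complete the estimate. First, I discard the negative contribution from the region $|p|^{2s}>\mu$ and extend the $n$-sum to all indices:
$$
\sum_n (\mu - E_n)_+ \leq \sum_n \int_{\R^d} (\mu - |p|^{2s})_+\, |\hat\psi_n(p)|^2\, dp.
$$
Second, I bound $\sum_n |\hat\psi_n(p)|^2$ pointwise by Bessel's inequality: setting $e_p(x) := (2\pi)^{-d/2} e^{ip\cdot x} \chi_\Omega(x)$, one has $|\hat\psi_n(p)| = |\langle \psi_n, e_p\rangle_{L^2(\R^d)}|$ and hence
$$
\sum_n |\hat\psi_n(p)|^2 \leq \|e_p\|^2 = (2\pi)^{-d}|\Omega|.
$$
Fubini, the scaling $p=\mu^{1/(2s)} q$, and definition \eqref{eq:weylconst} of $L_{1,d,s}^\cl$ then combine to deliver \eqref{eq:blyb}.

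To pass from \eqref{eq:blyb} to \eqref{eq:blyly}, observe that for any $N \in \N$ and any $\mu > 0$,
$$
\sum_{n=1}^N E_n = N\mu - \sum_{n=1}^N (\mu - E_n) \geq N\mu - \sum_n (E_n - \mu)_-,
$$
so \eqref{eq:blyb} yields $\sum_{n=1}^N E_n \geq N\mu - L_{1,d,s}^\cl |\Omega|\, \mu^{1+d/(2s)}$. Optimizing the right-hand side over $\mu$ and substituting the explicit evaluation $L_{1,d,s}^\cl = \omega_d (2\pi)^{-d}\cdot 2s/(d+2s)$ extracted from \eqref{eq:weylconst} gives exactly the constant on the right-hand side of \eqref{eq:blyly}.

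I do not anticipate a genuine obstacle: the whole argument is Berezin's original one transplanted to the symbol $|p|^{2s}$, the only fractional input being the Fourier representation of the form of $H_\Omega^{(s)}$ given in the paper's introduction. The point worth flagging is that the step of discarding the Fourier mass on $|p|^{2s} > \mu$ is \emph{precisely} what prevents this proof from yielding a matching lower bound---quantifying the boundary contribution lost at that step is essentially the content of Theorem \ref{getoursumsecond}.
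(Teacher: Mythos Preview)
Your proof is correct and follows the same line as the paper: the Berezin/Laptev Fourier argument for \eqref{eq:blyb} (the paper just cites \cite{La}) and Legendre duality for the passage to \eqref{eq:blyly}. The only minor difference is that the paper spells out the \emph{full} Legendre equivalence (both directions), thereby justifying the word ``equivalently'' in the statement, whereas your optimization argument gives only the implication \eqref{eq:blyb} $\Rightarrow$ \eqref{eq:blyly}; since you prove \eqref{eq:blyb} independently this is enough to establish both inequalities, but it leaves the reverse implication unaddressed.
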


Inequality \eqref{eq:blyb} is a special case of a result in \cite{La}. To see that \eqref{eq:blyb} and \eqref{eq:blyly} are equivalent, denote the left and right side of \eqref{eq:blyb} by $f_l(\mu)$ and $f_r(\mu)$, respectively, by $g_l(\nu)$ the piecewise linear function which coincides with the left side of \eqref{eq:blyly} for $\nu=N\in\N$ and by $g_r(\nu)$ the right side of \eqref{eq:blyly} with $N$ replaced by a continuous variable $\nu$. Note that \eqref{eq:blyly} is equivalent to $g_l(\nu)\geq g_r(\nu)$ for \emph{all} $\nu>0$. We have defined four convex functions and we note that $f_\#$ and $g_\#$ are Legendre transforms of each other with $\#=l,r$. Thus, the equivalence follows from the fact that the Legendre transform reverses inequalities.

The important feature of \eqref{eq:blyb} and \eqref{eq:blyly} is that the constant on the right side coincides with the asymptotic value as $\mu$ or $N$ tend to infinity; see \eqref{eq:getoursum} and \eqref{eq:getourgamma}. For remainder terms in \eqref{eq:blyly} with the asymptotically correct power of $N$ we refer to \cite{YY}.

Bounding the left side of \eqref{eq:blyly} from above by $N E_N(H_\Omega^{(s)})$ or the left side of \eqref{eq:blyb} from below by $(\Lambda-\mu)_- N(\Lambda,H_\Omega^{(s)})$ and optimizing in $\Lambda<\mu$ we obtain
\begin{equation}
\label{eq:blynumber}
N(\Lambda,H_\Omega^{(s)}) \leq \left(\frac{d+2s}d\right)^{\frac d{2s}} \frac{\omega_d}{(2\pi)^d}\,|\Omega| \Lambda^{\frac d{2s}} \,,
\qquad
E_N(H_\Omega^{(s)}) \geq \frac{d}{d+2s} \frac{(2\pi)^{2s}}{\omega_d^{\frac{2s}d}} |\Omega|^{-\frac{2s}d} N^{\frac{2s}d} \,.
\end{equation}
It is a challenging open question (the fractional analogue of P\'olya's conjecture) whether the factors $((d+2s)/d)^{d/(2s)}$ and $d/(d+2s)$ can be removed in these bounds. It was recently shown \cite{KwLaSi} that P\'olya's conjecture fails in $d=1$ for $s\in(0,1)$ and in $d=2$ at least for all sufficiently small values of $s$. (As an aside, we mention that P\'olya's conjecture also fails for the Laplacian in two-dimensions with a constant magnetic field and that in this case the factors $((d+2s)/d)^{d/(2s)}=2$ and $d/(d+2s)=2$ are optimal \cite{FrLoWe}.)

\medskip

We finally mention a well known inequality for the heat kernel. From the maximum principle for the heat equation we know that the heat kernel $k_t(x,x')$ for $H_\Omega^{(s)}$ satisfies
$$
0 \leq k_t(x,x') \leq \int_{\R^d} e^{-t|p|^{2s}} e^{ip\cdot(x-x')} \frac{dp}{(2\pi)^d}
\qquad\text{for all}\ x,x'\in\Omega \,.
$$
(The right side is the heat kernel of $(-\Delta)^s$.) We evaluate this inequality for $x=x'$. If $H_\Omega^{(s)}$ has discrete spectrum (which is the case, for instance, if $|\Omega|<\infty$) and $\psi_n$ denote the normalized eigenfunctions corresponding to the $E_n(H_\Omega^{(s)})$, then we obtain
\begin{equation}
\label{eq:heatlocal}
\sum_n e^{-t E_n(H_\Omega^{(s)})} |\psi_n(x)|^2 \leq \frac{\omega_d\,\Gamma(1+d/(2s))}{(2\pi)^d} \, t^{-d/(2s)} 
\qquad\text{for all}\ x\in\Omega \,.
\end{equation}
By integration over $x\in\Omega$ we obtain
\begin{equation*}
\sum_n e^{-t E_n(H_\Omega^{(s)})} \leq \frac{\omega_d\,\Gamma(1+d/(2s))}{(2\pi)^d} \,|\Omega|\, t^{-d/(2s)} \,,
\end{equation*}
which, in turn, could have been obtained directly by integrating \eqref{eq:blyb} against $t^2 e^{-t\mu}$ over $\mu\in\R_+$. However, in some applications the local information in \eqref{eq:heatlocal} is crucial. For example, one useful consequence of \eqref{eq:heatlocal} comes by bounded the left side from below by $e^{-t\mu} \sum_{E_n(H_\Omega^{(s)})<\mu} |\psi_n(x)|^2$. Optimizing the resulting inequality over $t>0$ yields
\begin{equation}
\label{eq:locnumber}
\sum_{E_n(H^{(s)}_\Omega)<\mu} |\psi_n(x)|^2 \leq \frac{\omega_d\,\Gamma(1+d/(2s))}{(2\pi)^d} \left(\frac{2se}{d}\right)^{d/(2s)} \mu^{d/(2s)} \,.
\end{equation}
While yielding a worse constant than \eqref{eq:blynumber} when integrated over $x\in\Omega$, this a-priori bound on the `local number of eigenvalues' is useful when proving $\mu\to\infty$ asymptotics.

%%%%%%%%%%%%%%%%%%%%%%%%%%%%%%%%%%%%%%%

\subsection{Lieb--Thirring inequalities}

Lieb--Thirring inequalities \cite{LiTh} provide bounds of sums of powers of negative eigenvalues of Schr\"odinger operators in terms of integrals of the potential. They play an important role in the proof of stability of matter by Lieb and Thirring; see \cite{LiSe} for a textbook presentation. For further background and references about Lieb--Thirring inequalities we also refer to the reviews \cite{LaWe,Hu}.

The following theorem summarizes Lieb--Thirring inequalities for fractional Schr\"o\-dinger operators.

\begin{theorem}\label{lt}
Let $d\geq 1$, $0<s<1$ and
$$
\begin{cases}
\gamma \geq 1-1/(2s) & \text{if}\ d=1 \ \text{and}\ s> 1/2 \,, \\
\gamma >0 & \text{if}\ d=1 \ \text{and}\ s= 1/2 \,, \\
\gamma\geq 0 & \text{if}\ d\geq 2\ \text{or}\ d=1 \ \text{and}\ s<1/2 \,.
\end{cases}
$$
Then there is a $L_{\gamma,d,s}$ such that for all $V$,
\begin{equation}
\label{eq:lt}
\tr\left( (-\Delta)^s + V \right)_-^\gamma \leq L_{\gamma,d,s} \int_{\R^d} V_-^{\gamma+d/(2s)} \,dx \,.
\end{equation}
\end{theorem}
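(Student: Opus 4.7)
The plan is to exploit the dichotomy $d > 2s$ versus $d \leq 2s$. In the sub-critical regime $d > 2s$ (which covers $d \geq 2$ as well as $d = 1$, $0 < s < 1/2$), I would first establish the endpoint $\gamma = 0$ Cwikel--Lieb--Rozenblum (CLR) bound
$$N(0,(-\Delta)^s + V) \leq L_{0,d,s} \int_{\R^d} V_-^{d/(2s)}\,dx.$$
After replacing $V$ by $-V_-$ (which only increases the left side), the Birman--Schwinger principle identifies this with a bound on the number of eigenvalues of the compact operator $K := V_-^{1/2}(-\Delta)^{-s} V_-^{1/2}$ lying in $[1,\infty)$, controlled by $\|V_-\|_{d/(2s)}^{d/(2s)}$. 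Several routes are available: Cwikel's argument using sharp Lorentz-space mapping properties of the Riesz kernel $|x|^{2s-d}$, Lieb's argument based on the heat kernel of $(-\Delta)^s$ (whose on-diagonal value is the constant multiple $c_{d,s}\,t^{-d/(2s)}$ of the standard kernel), or Rozenblum's combinatorial method. Any of these produces a finite $L_{0,d,s}$.

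To pass from $\gamma = 0$ to general $\gamma > 0$ in the sub-critical regime, I would apply the Aizenman--Lieb lifting. Writing
$$\tr \bigl((-\Delta)^s + V\bigr)_-^\gamma = \gamma \int_0^\infty \tau^{\gamma-1}\, N\bigl(0,(-\Delta)^s + V+\tau\bigr)\,d\tau,$$
inserting the CLR bound inside the integral, interchanging the two integrations, and evaluating
$$\int_0^{V_-(x)} \tau^{\gamma-1}\bigl(V_-(x)-\tau\bigr)^{d/(2s)}\,d\tau = V_-(x)^{\gamma + d/(2s)}\, B\bigl(\gamma,\, 1+\tfrac{d}{2s}\bigr),$$
one obtains \eqref{eq:lt} with explicit constant $L_{\gamma,d,s} = L_{0,d,s}\,\gamma\, B(\gamma,\, 1+d/(2s))$.

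In the critical one-dimensional regime $d=1$, $s \geq 1/2$, CLR itself fails because arbitrarily weak attractive potentials produce bound states, so a different argument is required. For $d = 1$, $s = 1/2$ with $\gamma > 0$, I would aim to prove the bound directly at a single small $\gamma_0 > 0$ (for instance, through a logarithmically improved version of the Hardy--Littlewood--Sobolev inequality fed into Birman--Schwinger) and then use Aizenman--Lieb to cover all larger $\gamma$. For $d = 1$, $s > 1/2$ with $\gamma \geq 1 - 1/(2s)$, I would mimic Weidl's treatment of the endpoint $\gamma = 1/2$ for $-d^2/dx^2 + V$: partition $\R$ into intervals on which $V$ carries comparable local $L^{\gamma + 1/(2s)}$ mass, restrict the form of $(-\Delta)^s$ to each interval, and apply the one-dimensional fractional Sobolev inequality \eqref{eq:sobolevinterpol} on each piece. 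The main obstacle, and the reason this last case is by far the hardest, is that the quadratic form of $(-\Delta)^s$ is non-local and does not decouple across a partition of $\R$: the localization step produces commutator terms involving $(-\Delta)^{s/2}$ and the cut-off functions, and these have to be absorbed without losing the sharp threshold $\gamma = 1 - 1/(2s)$.
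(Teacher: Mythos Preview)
Your treatment of the sub-critical regime $d>2s$ (CLR at $\gamma=0$ followed by the Aizenman--Lieb lifting) is standard and correct; the paper simply cites Daubechies for these cases and for $d=1$, $s=1/2$, $\gamma>0$, so there is nothing to compare there.

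For the critical endpoint $d=1$, $1/2<s<1$, $\gamma=1-1/(2s)$, you correctly point to Weidl's scheme, but the ``main obstacle'' you describe is a misconception, and precisely this misconception blocks you from the actual proof. The paper does \emph{not} localize with smooth cut-offs and does \emph{not} have to absorb any commutator terms. Instead it uses the double-integral representation
\[
\|(-\Delta)^{s/2}\psi\|^2 = a_{1,s}\iint_{\R\times\R}\frac{|\psi(x)-\psi(y)|^2}{|x-y|^{1+2s}}\,dx\,dy
\;\geq\; \sum_n a_{1,s}\iint_{Q_n\times Q_n}\frac{|\psi(x)-\psi(y)|^2}{|x-y|^{1+2s}}\,dx\,dy
=:\sum_n t_{Q_n}^{(s)}[\psi],
\]
which is a one-line inequality obtained by discarding the off-diagonal blocks; the non-locality helps rather than hurts. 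The restricted forms $t_{Q_n}^{(s)}$ are the natural ``Neumann'' forms on the intervals, and the whole localization error you worry about is simply absent.

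Two further details of your sketch are also off. The partition is not by equal $L^{\gamma+1/(2s)}$ mass of $V_-$; one chooses the $Q_n$ so that $|Q_n|^{2s-1}\!\int_{Q_n}V_-\,dx=\alpha$ for a fixed small $\alpha$. And the local input on each $Q_n$ is not the interpolation inequality \eqref{eq:sobolevinterpol} but a Poincar\'e--Sobolev $L^\infty$ bound for mean-zero functions, $\sup_{Q}|\psi|^2\le C_s|Q|^{2s-1}t_Q^{(s)}[\psi]$ (proved via the Garsia--Rodemich--Rumsey lemma). With that bound one shows that $T_{Q_n}^{(s)}+V$ has at most one negative eigenvalue, controlled by $\bigl(\int_{Q_n}V_-\bigr)^{2s/(2s-1)}$; raising to the power $(2s-1)/(2s)$ and summing gives exactly $\int_\R V_-\,dx$ on the right, which is the endpoint inequality.
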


This theorem, with the additional assumption $\gamma>1-1/(2s)$ if $d=1$, $s>1/2$, appears in \cite{Da}, which also has explicit values for $L_{\gamma,d,s}$ in the physically most relevant cases. Since we have not found the case $\gamma=1-1/(2s)$ if $d=1$, $s>1/2$, in the literature, we provide a proof in Appendix \ref{app:ltcrit}.

To appreciate the strength of Theorem \ref{lt}, we note that by bounding the sum over all eigenvalues by a single one, we deduce from \eqref{eq:lt} that
$$
E_1((-\Delta)^s + V) \geq - \left( L_{\gamma,d,s} \int_{\R^d} V_-^{\gamma+d/(2s)} \,dx \right)^{1/\gamma} \,,
$$
which is the bound from Theorem \ref{ke} and which we have seen to be equivalent to the Sobolev inequality \eqref{eq:sobolevinterpol}. Moreover, replacing $V$ by $\alpha V$ and comparing with Theorem \ref{weyl} we see that the right side of \eqref{eq:lt} has the correct order of growth in the large coupling limit $\alpha\to\infty$. Thus, Theorem \ref{lt} shows that the semi-classical approximation is, up to a multiplicative constant, a uniform upper bound. This observation and a density argument based on Ky-Fan's eigenvalue inequality (see, e.g., \cite[Theorem 1.7]{Si2}) can be used to show that for $\gamma$ as in Theorem \ref{lt} the asymptotics \eqref{eq:weyl} and \eqref{eq:weylgamma} hold for all $V$ with $V_-\in L^{\gamma+d/(2s)}(\R^d)$.

Let us comment on the case $\gamma=0$ if $d=1$ and $s=1/2$. In this case it is easy to see that
$$
\inf_{\|\psi\|=1} \left( \|(-\Delta)^{1/4}\psi\|^2 + \int_\R V|\psi|^2\,dx \right) <0
\qquad\text{if}\ \int_\R V\,dx <0 \,,
$$
and so inequality \eqref{eq:lt} necessarily fails for $\gamma=0$. Remarkably, in this case one can show a \emph{reverse bound},
\begin{equation}
\label{eq:ltcritdim}
\tr\left( (-\Delta)^{1/2} + V \right)_-^0 \geq c \int_{\R} V_-\,dx
\qquad\text{if}\ V\leq 0 \,.
\end{equation}
(This is contained in \cite{Sh} up to a conformal transformation.)

While there has been substantial progress concerning the sharp constants in the $s=1$ analogue of Theorem \ref{lt}, no sharp constant seems to be known in the case $s<1$.

\medskip

Our final topic are Hardy--Lieb--Thirring inequalities. We recall \cite{He} that Hardy's inequality states that for $0<s<d/2$ and $\psi\in \dot H^s(\R^d)$, the homogeneous Sobolev space,
$$
\int_{\R^d} |p|^{2s} |\hat\psi(p)|^2 \,dp \geq \mathcal C_{s,d} \int_{\R^d} |x|^{-2s} |\psi(x)|^2\,dx
$$
with the sharp constant
$$
\mathcal C_{s,d} = 2^{2s}\, \frac{\Gamma((d+2s)/4)^2}{\Gamma((d-2s)/4)^2} \,.
$$
As a consequence, $(-\Delta)^s - \mathcal C_{s,d}|x|^{-2s}$ is a non-negative operator. The following theorem says that, up to avoiding the endpoint $\gamma=0$ and modifying the constant, the Lieb--Thirring inequalities from Theorem \ref{lt} remain valid when $(-\Delta)^s$ is replaced by $(-\Delta)^s - \mathcal C_{s,d}|x|^{-2s}$.

\begin{theorem}\label{hlt}
Let $d\geq 1$, $0<s<d/2$ and $\gamma>0$. Then there is a constant $L_{\gamma,d,s}^{\mathrm{HLT}}$ such that
\begin{equation}
\label{eq:hlt}
\tr\left( (-\Delta)^s - \mathcal C_{s,d}|x|^{-2s} + V \right)_-^\gamma \leq L_{\gamma,d,s}^{\mathrm{HLT}} \int_{\R^d} V_-^{\gamma+d/(2s)}\,dx \,.
\end{equation}
\end{theorem}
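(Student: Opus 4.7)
The proof plan combines a Hardy--Sobolev inequality with the Lieb--Thirring machinery via a ground state transformation that makes the Hardy operator $H_0 := (-\Delta)^s - \mathcal{C}_{s,d}|x|^{-2s}$ manifestly non-negative.

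First, the virtual ground state $\omega(x) = |x|^{-(d-2s)/2}$ formally satisfies $(-\Delta)^s\omega = \mathcal{C}_{s,d}|x|^{-2s}\omega$ in the sense of distributions, and substituting $u = \omega v$ yields the ground state representation
$$
\int_{\R^d} |(-\Delta)^{s/2}u|^2\,dx - \mathcal{C}_{s,d}\int_{\R^d} |x|^{-2s}|u|^2\,dx = \frac{a_{d,s}}{2}\iint_{\R^d\times\R^d} \frac{|v(x)-v(y)|^2}{|x-y|^{d+2s}}\omega(x)\omega(y)\,dx\,dy.
$$
The isometry $u \mapsto u/\omega$ from $L^2(dx)$ to $L^2(\omega^2\,dx)$ thus realizes $H_0 + V$ as a perturbed non-negative generalized Dirichlet form operator $\mathcal{L} + V$ on the weighted space. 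Combining this representation with rearrangement (cf.\ \eqref{eq:rearr}) and the standard Sobolev inequality gives the Hardy--Sobolev inequality
$$
\int_{\R^d} |(-\Delta)^{s/2}u|^2\,dx - \mathcal{C}_{s,d}\int_{\R^d} |x|^{-2s}|u|^2\,dx \geq \mathcal{S}_{s,d}\,\|u\|_{2d/(d-2s)}^2,
$$
with a scaling analysis identifying the quasi-extremal.

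Next, I would convert the Hardy--Sobolev inequality into an on-diagonal heat-kernel bound
$$
k_t(x,x) \leq C\, t^{-d/(2s)}\left(1 \wedge \tfrac{t^{1/(2s)}}{|x|}\right)^{d-2s}
$$
for $e^{-tH_0}$ via Nash's argument applied to the transformed Dirichlet form. Fix $\gamma_0 > 0$ and set $p = \gamma_0 + d/(2s)$. Applying the Birman--Schwinger principle together with a Schatten-$\mathfrak{S}^p$ estimate on $V_-^{1/2}(H_0 + \tau)^{-1}V_-^{1/2}$, and using the Laplace representation $(H_0+\tau)^{-1} = \int_0^\infty e^{-\tau t} e^{-tH_0}\,dt$, should yield
$$
\tr\left( H_0 + V \right)_-^{\gamma_0} \leq C\int_{\R^d} V_-^{\gamma_0+d/(2s)}\,dx,
$$
the key point being that the singular factor $(t^{1/(2s)}/|x|)^{d-2s}$ in the heat-kernel bound integrates against $e^{-p\tau t}\,dt$ to a bounded function of $x$ precisely because $\gamma_0 > 0$. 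The Aizenman--Lieb duplication identity
$$
\tr(H_0+V)_-^{\gamma} = \frac{1}{B(\gamma_0+1,\gamma-\gamma_0)}\int_0^\infty \tau^{\gamma-\gamma_0-1}\,\tr(H_0+V+\tau)_-^{\gamma_0}\,d\tau
$$
then extends \eqref{eq:hlt} to all $\gamma > \gamma_0$.

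The main technical obstacle is the reconciliation, during the Schatten-norm estimate, of the natural weight $\omega^2 = |x|^{-(d-2s)}$ --- which is singular at the origin and inherent to the ground state representation --- with the \emph{unweighted} right-hand side of \eqref{eq:hlt}. The restriction $\gamma > 0$ is both crucial and sharp: at $\gamma = 0$ the $t$-integral in the above argument diverges logarithmically, consistent with the known failure of the CLR-type inequality for $H_0$. Making the Schatten estimate quantitative enough to track the correct dependence on $\gamma_0$ as $\gamma_0 \to 0^+$, without losing the unweighted form of the right-hand side, is the most delicate part of the argument.
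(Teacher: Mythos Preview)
Your outline and the paper's sketch share the same first step, the Hardy--Sobolev inequality \eqref{eq:hltsingle}, but then diverge. The paper does not pass through heat kernels or Birman--Schwinger at all: it invokes the abstract result of \cite{FrLiSe,FrLiSe3} that a Sobolev-type inequality for a non-negative operator implies the corresponding Lieb--Thirring inequality directly, with an arbitrarily small loss in the exponent (harmless here since $\gamma>0$ is an open condition). This is precisely what sidesteps the weight-reconciliation issue you identify at the end.

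Your heat-kernel route, as written, has an error and an unresolved gap. The on-diagonal bound you state points the wrong way: for the critical Hardy operator the kernel of $e^{-tH_0}$ is \emph{larger} than the free one near the origin (the semigroup is not submarkovian, so the Nash/Varopoulos argument does not yield a uniform $t^{-d/(2s)}$ bound), and the correction factor should be $(1\vee t^{1/(2s)}/|x|)^{d-2s}$ rather than $(1\wedge\cdots)$. With the correct bound the near-origin behaviour is $k_t(x,x)\le C\,t^{-1}|x|^{-(d-2s)}$, and this $|x|$-singularity does not integrate out against any weight in $t$ alone --- it simply factors through. So the obstacle you flag is genuine and is not resolved by the mechanism you describe; making this route work would require substantially more than what you sketch. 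Note also that your rearrangement-based derivation of Hardy--Sobolev carries the restriction $s\le 1$, as the paper remarks for the \cite{FrLiSe} argument; the full range $0<s<d/2$ needs the separate method of \cite{Fr}.
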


We emphasize that the assumption $s\leq 1$ is \emph{not} needed here. Moreover, arguing as before \eqref{eq:ltcritdim} one can show that the inequality does not hold for $\gamma=0$.

Theorem \ref{hlt} was initially proved for $s=1$ in \cite{EkFr} and then extended in \cite{FrLiSe} to $0<s<1$ (with $0<s<1/2$ if $d=1$). The full result is from \cite{Fr} and uses an idea from \cite{SSS}.

The proof in \cite{FrLiSe} (for $0<s\leq 1$) allows for the inclusion of a magnetic field. This leads to the proof of stability of relativistic matter with magnetic fields for nuclear charges up to and including the critical value; see also \cite{FrLiSe2}.

Let us briefly comment on the proof of Theorem \ref{hlt} in \cite{FrLiSe}, since this will also be relevant in the following. Similarly as after Theorem \ref{lt} we observe that by bounding the sum over all eigenvalues by a single one, we deduce from \eqref{eq:hlt} that
$$
E_1((-\Delta)^s - \mathcal C_{s,d}|x|^{-2s} + V) \geq - \left( L_{\gamma,d,s}^{\mathrm{HLT}} \int_{\R^d} V_-^{\gamma+d/(2s)} \,dx \right)^{1/\gamma} \,,
$$
which in turn, by the argument in the proof of Theorem \ref{ke}, is equivalent to the Hardy--Sobolev inequality
\begin{equation}
\label{eq:hltsingle}
\left( \|(-\Delta)^{s/2}\psi\|^2 - \mathcal C_{s,d} \||x|^{-s}\psi\|^2 \right)^\theta \|\psi\|^{2(1-\theta)} \geq C_{d,q,s} \|\psi\|_q^2
\end{equation}
with $1/(\gamma+d/(2s)) + 2/q = 1$ (and some $\theta\in(0,1)$ uniquely determined by scaling). The proof in \cite{FrLiSe} proceeds by first showing the latter inequality (at this point the assumption $s\leq 1$ enters through the use of the rearrangement inequality \eqref{eq:rearr} for $\|(-\Delta)^{s/2}\psi\|^2$) and then by proving, in an abstract set-up (see also \cite{FrLiSe3}), that a Sobolev inequality, in fact, \emph{implies} a Lieb--Thirring inequality. (To be more precise, there is an arbitrarily small loss in the exponent. For instance, \eqref{eq:hltsingle} for a given $q$ implies \eqref{eq:hlt} for any $\gamma$ with $1/(\gamma+d/(2s)) + 2/q < 1$. But since we want to prove \eqref{eq:hlt} for an open set of exponents $\gamma$, this loss is irrelevant for us.) This concludes our discussion of the proof of Theorem \ref{hlt}.

The Hardy inequalities discussed so far involve the function $|x|^{-2s}$ with a singularity at the origin. For convex domains there are also Hardy inequalities with the function $\dist(x,\Omega^c)^{-2s}$, or more generally, for arbitrary domains with the function
$$
m_{2s}(x) := \left( \frac{2\pi^{\frac{d-1}2}\Gamma(\frac{1+2s}{2})}{\Gamma(\frac{d+2s}{2})} \right)^{\frac1{2s}} \left( \int_{\Sph^{d-1}} \frac{d\omega}{d_\omega(x)^{2s}} \right)^{-\frac{1}{2s}} ,
$$
where $d_\omega(x):= \inf\{|t|:\ x+t\omega\not\in\Omega\}$. (We say `more generally' since one can show that $m_{2s}(x)\leq\dist(x,\Omega^c)$ for convex $\Omega$; see \cite{LoSl}.) The sharp Hardy inequality of Loss and Sloane \cite{LoSl} states that for $d\geq 2$, $1/2<s<1$, any open $\Omega\subset\R^d$ and any $\psi\in C_c^1(\Omega)$,
$$
\int_{\R^d} |p|^{2s}|\hat\psi(p)|^2\,dp \geq \mathcal C_{s}' \int_\Omega m_{2s}(x)^{-2s} |\psi(x)|^2\,dx
$$
with the sharp constant
$$
\mathcal C_{s}' = \frac{\Gamma(\frac{1+2s}{2})}{|\Gamma(-s)|} \, \frac{B(\frac{1+2s}{2},1-s)-2^{2s}}{2s \sqrt\pi} \,.
$$
This inequality is the fractional analogue of Davies' inequality \cite{D}. The fractional inequality in the special case of a half space is due to \cite{BoDy}.

The analogue of Theorem \ref{hlt} is

\begin{theorem}\label{hltbdry}
Let $d\geq 2$, $1/2<s<1$ and $\gamma\geq 0$. Then there is a constant $L_{\gamma,d,s}^{\mathrm{HLT'}}$ such that for all open $\Omega\subset\R^d$ and all $V$,
\begin{equation}
\label{eq:hltbdry}
\tr\left( H_\Omega^{(s)} - \mathcal C_{s}'m_{2s}^{-2s} + V \right)_-^\gamma \leq L_{\gamma,d,s}^{\mathrm{HLT'}} \int_{\Omega} V_-^{\gamma+d/(2s)}\,dx \,.
\end{equation}
\end{theorem}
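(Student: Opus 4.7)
The plan is to mirror the two-step strategy that established Theorem~\ref{hlt} in \cite{FrLiSe}: first prove the Hardy--Sobolev inequality on $\Omega$ with the sharp Loss--Sloane constant, then invoke the abstract Sobolev-implies-Lieb--Thirring machinery of \cite{FrLiSe3}. The target of the first step is
\begin{equation}
\label{eq:targethardysobolev}
\int_{\R^d} |p|^{2s}|\hat\psi(p)|^2\,dp - \mathcal{C}_s' \int_\Omega m_{2s}(x)^{-2s}|\psi(x)|^2\,dx \geq c_{d,s}\,\|\psi\|_{q}^{2},
\end{equation}
for all $\psi\in \mathring H^s(\Omega)$, with $q=2d/(d-2s)$ (and any $2<q<\infty$ when $d=2s$), and a constant $c_{d,s}>0$ independent of $\Omega$. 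As explained in the paper's discussion of Theorem~\ref{hlt}, once \eqref{eq:targethardysobolev} is in hand, the abstract result of \cite{FrLiSe3} (applied to $A=H_\Omega^{(s)}-\mathcal{C}_s'm_{2s}^{-2s}$) converts it into \eqref{eq:hltbdry} for every $\gamma>0$, with a small loss in the admissible exponent range that is irrelevant for an open set of $\gamma$. The endpoint $\gamma=0$ can be recovered by a standard Birman--Solomyak/Cwikel--Lieb--Rozenblum argument from the same Sobolev embedding, using the strict positivity of $A$ inherited from the non-saturation of Loss--Sloane.

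The heart of the matter is \eqref{eq:targethardysobolev}, because the sharp Loss--Sloane constant must be retained while a Sobolev remainder is extracted. A cheap combination of Loss--Sloane with the fractional Sobolev inequality on $\R^d$ gives, for every $\epsilon\in(0,1)$,
\begin{equation*}
\int_{\R^d}|p|^{2s}|\hat\psi|^2\,dp - (1-\epsilon)\mathcal{C}_s'\int_\Omega m_{2s}^{-2s}|\psi|^2\,dx \geq \epsilon\, S_{d,s}\,\|\psi\|_{q}^2,
\end{equation*}
but with the non-sharp Hardy constant $(1-\epsilon)\mathcal{C}_s'$. To upgrade to the sharp constant I would use an IMS-style partition of unity $1=\chi_{\rm in}^2+\chi_{\rm out}^2$ with $\chi_{\rm out}$ localized in a thin tubular neighborhood of $\partial\Omega$: on $\supp\chi_{\rm out}$ one retains the full Loss--Sloane inequality (which is essentially attained only on half-space-like patches close to $\partial\Omega$), while on $\supp\chi_{\rm in}$ the weight $m_{2s}^{-2s}$ is bounded and can be absorbed by the $\R^d$ Sobolev inequality, leaving some slack to also dominate the commutator error. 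That non-local commutator is controlled by an argument in the spirit of \cite{SSS} used for Theorem~\ref{hlt} in \cite{Fr}; this is where the hypothesis $s>1/2$ enters, since the $s=1/2$ case would saturate the available control on the $L^2$-boundedness of the commutator.

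The main obstacle is precisely this upgrade to the sharp constant, \emph{uniformly} in $\Omega$. In Theorem~\ref{hlt} the reduction to a tractable one-dimensional problem is done via symmetric decreasing rearrangement around the singularity of $|x|^{-2s}$, thanks to \eqref{eq:rearr}; no such reduction is available here because $m_{2s}^{-2s}$ depends genuinely on the geometry of $\Omega$ and is not rearrangement invariant. A quantitative comparison of $m_{2s}$ with the half-space distance near every boundary point, together with the $\Omega$-uniformity already built into the Loss--Sloane inequality, should make the boundary localization go through, and will yield \eqref{eq:targethardysobolev} with an $\Omega$-independent $c_{d,s}$. After that, the passage to \eqref{eq:hltbdry} is the routine abstract machinery of \cite{FrLiSe3} and produces a constant $L_{\gamma,d,s}^{\mathrm{HLT'}}$ depending only on $d$, $s$ and $\gamma$.
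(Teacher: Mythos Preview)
Your overall architecture is exactly the paper's: establish the Hardy--Sobolev inequality \eqref{eq:targethardysobolev} with the sharp Loss--Sloane constant, then feed it into the abstract Sobolev $\Rightarrow$ Lieb--Thirring machinery of \cite{FrLiSe3} in the manner of \cite{FrLo}. Two structural differences are harmless: the paper reduces to $\gamma=0$ \emph{first} via Aizenman--Lieb \cite{AiLi} and then runs the machinery once, whereas you run the machinery for $\gamma>0$ and patch up $\gamma=0$ afterwards; either order works.

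The genuine gap is in your proposed proof of \eqref{eq:targethardysobolev}. In the paper this inequality is not proved by IMS localization; it is quoted from \cite{DyFr}, where the argument proceeds by a ground-state type substitution adapted to the Loss--Sloane weight rather than by any boundary/interior decomposition. Your IMS scheme is problematic precisely where you flag it: the theorem is stated for \emph{all} open $\Omega\subset\R^d$, with no regularity whatsoever, and a ``thin tubular neighborhood of $\partial\Omega$'' together with a smooth partition $\chi_{\rm in}^2+\chi_{\rm out}^2=1$ need not exist, let alone with $\Omega$-independent control on the non-local localization error. Even on smooth domains the constants produced by your cut-offs would a priori depend on the geometry, which is exactly what the $\Omega$-independence of $L_{\gamma,d,s}^{\mathrm{HLT'}}$ forbids. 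The \cite{DyFr} approach avoids all of this because it never localizes: the Loss--Sloane weight $m_{2s}^{-2s}$ is itself built as an average over directions $\omega\in\Sph^{d-1}$ of one-dimensional Hardy weights, and the Sobolev remainder can be extracted at that one-dimensional level, uniformly.

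One further correction: the restriction $s>1/2$ is not a commutator artifact. It is already a hypothesis of the Loss--Sloane inequality itself (see the statement just before the theorem), so it enters before any localization could even begin.
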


We emphasize that, in contrast to Theorem \ref{hlt}, now $\gamma=0$ is allowed.

Theorem \ref{hltbdry} is the analogue of a result for $s=1$, $d\geq 3$ in \cite{FrLo}. Since it appears here for the first time, we comment briefly on its proof. Adapting an argument of Aizenman and Lieb \cite{AiLi} to our setting we see that it suffices to prove the inequality for $\gamma=0$. As in the proof of Theorem \ref{hlt} from \cite{FrLiSe} the first step is the `single function result', that is, the analogue of \eqref{eq:hltsingle}, which reads
\begin{equation}
\label{eq:hltbdrysingle}
\|(-\Delta)^{s/2}\psi\|^2 - \mathcal C_s' \|m_{2s}^{-s}\psi\|^2 \geq C_{d,s} \|\psi\|_{2d/(d-2s)}^2
\end{equation}
for $\psi\in C_c^1(\Omega)$. This inequality is proved in \cite{DyFr}. With \eqref{eq:hltbdrysingle} at hand one can apply the abstract machinery from \cite{FrLiSe3} in the same way as in \cite{FrLo} to obtain the theorem.

%%%%%%%%%%%%%%%%%%%%%%%%%%%%%%%%%%%%%%%%%%%%%%%%%%%%%%%%%

\section{Some further topics}\label{sec:omitted}

We conclude with some brief comments on further topics in the spectral theory of fractional Laplacians which are not included in the main part of this text.

(1) Positivity and uniqueness of the ground state. This is a classical result which can be derived using Perron--Frobenius arguments and the positivity of the heat kernel or by the maximum principle.

(2) Simplicity of excited states for radial fractional Schr\"odinger operators operators. This question has some relevance in non-linear problems and has recently been investigated in \cite{FrLe,FrLeSi} for Schr\"odinger operators with radially increasing potentials.

(3) Decay of eigenfunctions. In contrast to the local case $s=1$, the decay of eigenfunctions of Schr\"odinger operators with potentials tending to zero at infinity is only algebraic; see \cite{CaMaSi}. (Earlier bounds in the massive case are in \cite{Na,Na2}.) For bounds for growing potentials see, e.g., \cite{KaKu}.

(4) Shape of the ground state and of some excited states for the fractional Laplacian on a (convex) set. See \cite{BaKu04,BaKuMe} for some results in $d=1$ and \cite{Ku} for a related result in $d=2$. For superharmonicity in any $d$ for some $s$, see \cite{BadeBl}. For antisymmetry of the first excited state on a ball, see \cite{DyKuKw}. (This has also numerical methods for upper and lower bounds on the eigenvalues on a ball).

(5) Number of nodal domains. Is Sturm's bound in $d=1$ valid? Is Courant's bound in $d\geq 2$ valid? For some partial results, see \cite{BaKu04,FrLe,FrLeSi}.

(6) Regularity of eigenfunctions. Despite the non-locality of the fractional Laplacian, eigenfunctions of $(-\Delta)^s +V$ can be shown to be regular where $V$ is regular \cite{DFSS,DFSS2}. For improved H\"older continuity results for radial potentials, see \cite{Le}.

(7) Bounds on the gap $E_2(H_\Omega^{(s)})-E_1(H_\Omega^{(s)})$ for convex $\Omega$. See \cite{BaKu06,BaKu06b,Ka}; there are some conjectures in \cite{BaKu06}.

(8) Heat trace asymptotics for fractional Schr\"odinger operators and heat content asymptotics. See \cite{BaYo,Ac,Ac2,AcBa}.

(9) Many-body Coulomb systems. Stability of matter \cite{Da,LiTh2,LiYa1,FeLl,LiYa2,FrLiSe,FrLiSe2}. Proof of the Scott correction without \cite{FrSiWa,SSS} and with (self-generated) magnetic field \cite{EFS}.

%%%%%%%%%%%%%%%%%%%%%%%%%%%%%%%%%%%%%%%%%%%%%%%%%%%%%%%%%

\appendix

\section{Proof of \eqref{eq:formconst}}\label{app:const}

The following computation of $a_{s,d}$ is a slight simplification of \cite[Lemma 3.1]{FrLiSe}. It follows from Plancherel's theorem that
$$
\iint_{\R^d\times\R^d} \!\frac{|\psi(x)-\psi(y)|^2}{|x-y|^{d+2s}} \,dx\,dy = 
\iint_{\R^d\times\R^d}\!\frac{|\psi(x)-\psi(x+h)|^2}{|h|^{d+2s}} \,dx\,dh = 
\int_{\R^d} t(p) |\hat\psi(p)|^2\,dp
$$
with
$$
t(p) = \int_{\R^d} \frac{|1- e^{ip\cdot h}|^2}{|h|^{d+2s}} \,dh = 2 \int_{\R^d} \frac{1- \cos(p\cdot h)}{|h|^{d+2s}} \,dh \,.
$$
By homogeneity and rotation invariance we have
$$
t(p) = a_{d,s}^{-1}\, |p|^{2s}
$$
with
$$
a_{d,s}^{-1} = 2 \int_{\R^d} \frac{1- \cos(h_d)}{|h|^{d+2s}} \,dh \,.
$$
It remains to compute this integral. We begin with the case $d=1$, which is an exercise in complex analysis. First let $0<s<1/2$, so
$$
a_{1,s}^{-1} = 4 \re \int_0^\infty \frac{1- e^{ih}}{h^{1+2s}} \,dh \,.
$$
Since $(1-e^{iz})/z^{1+2s}$ is analytic in the upper right quadrant and sufficiently fast decaying as $|z|\to\infty$, we can move the integration from the positive real axis to the positive imaginary axis and obtain
$$
\int_0^\infty \frac{1- e^{ih}}{h^{1+2s}} \,dh = -i \int_0^\infty \frac{1- e^{-t}}{(it)^{1+2s}} \,dt = -i^{-2s} \int_0^\infty \frac{1- e^{-t}}{t^{1+2s}} \,dt \,.
$$
The integral here can be recognized as a gamma function. Indeed, we have if $\re z>0$,
$$
\Gamma(z) - \frac1z = -\int_0^1 \left(1-e^{-t}\right) t^{z-1}\,dt + \int_1^\infty e^{-t} t^{z-1}\,dt \,.
$$
Since the right side is analytic in $\{\re z>-1\}$, the formula extends to this region and, in particular,
$$
\Gamma(z) = - \int_0^\infty \left(1-e^{-t}\right) t^{z-1}\,dt
\qquad\text{if}\ -1<\re z<0 \,.
$$
Thus, we have shown that
$$
a_{1,s}^{-1} = 4 \re i^{-2s} \Gamma(-2s) = 4 \cos(\pi s) \Gamma(-2s) \,.
$$
Using the duplication formula $\Gamma(-2s)= \pi^{-1/2} 2^{2s-1} \Gamma(-s) \Gamma((1-2s)/2)$ and the reflection formula $\Gamma((1+2s)/2)\Gamma((1-2s)/2)= -\pi/\cos(\pi s)$ we obtain the claimed formula for $a_{1,s}$. When $1/2<s<1$, we start from
$$
a_{1,s}^{-1} = 4 \re \int_0^\infty \frac{1+ih- e^{ih}}{h^{1+2s}} \,dh
$$
and argue similarly using
$$
\Gamma(z) = - \int_0^\infty \left(1-t-e^{-t}\right) t^{z-1}\,dt
\qquad\text{if}\ -2<\re z<-1 \,.
$$
Finally, the formula for $s=1/2$ follows by continuity. This concludes the proof of \eqref{eq:formconst} for $d=1$.

Now let $d\geq 2$ and write $h=(h',h_d)\in \R^{d-1}\times\R$ and compute for fixed $h_d\in\R$
\begin{align*}
\int_{\R^{d-1}} \frac{dh'}{(h'^2+h_d^2)^{(d+2s)/2}} = \frac{b_{d,s}}{|h_d|^{1+2s}}
\end{align*}
with
$$
b_{d,s} = \int_{\R^{d-1}} \frac{d\eta}{(1+\eta^2)^{(d+2s)/2}} \,.
$$
Thus,
$$
a_{d,s}^{-1} = 2 b_{d,s} \int_\R \frac{1-\cos h_d}{|h_d|^{1+2s}} \,dh_d = b_{d,s} a_{1,s}^{-1} \,,
$$
and it remains to compute $b_{d,s}$. To do so we use \cite[(6.2.1), (6.2.2)]{AbSt} and obtain
\begin{align*}
b_{d,s} & = |\Sph^{d-2}| \int_0^\infty \frac{r^{d-2} dr}{(1+r^2)^{(d+2s)/2}}
= \frac{|\Sph^{d-2}|}{2} \int_0^\infty \frac{t^{(d-3)/2} dt}{(1+t)^{(d+2s)/2}} \\
& = \frac{|\Sph^{d-2}|}{2} \frac{\Gamma((d-1)/2)\,\Gamma((1+2s)/2)}{\Gamma((d+2s)/2)} 
= \pi^{(d-1)/2} \frac{\Gamma((1+2s)/2)}{\Gamma((d+2s)/2)} \,.
\end{align*}
This concludes the proof of \eqref{eq:formconst} for $d\geq 2$.

%%%%%%%%%%%%%%%%%%%%%%%%%%%%%%%%%%%%%%%%%%%%%%%%%%%%%%%%%

\section{Lieb--Thirring inequality in the critical case}\label{app:ltcrit}

Our goal in this appendix is to prove Theorem \ref{lt} in the critical case $d=1$, $1/2<s<1$ and $\gamma=1-1/(2s)$. Our argument will be a modification of Weidl's argument \cite{We} in the $s=1$ case (see also the unpublished manuscript \cite{Si}).

For $1/2<s<1$, any bounded interval $Q\subset\R$ and any $\psi\in H^s(Q)$, we define
$$
t_Q^{(s)}[\psi] := a_{1,s} \iint_{Q\times Q} \frac{|\psi(x)-\psi(y)|^2}{|x-y|^{1+2s}}\,dx\,dy \,,
$$
where $a_{1,s}$ is the constant from \eqref{eq:formconst}. We shall need the following Poincar\'e--Sobolev inequality for this quadratic form.

\begin{lemma}\label{sobolev}
Let $d=1$ and $1/2<s<1$. Then there is a constant $C_s$ such that for any bounded interval $Q\subset\R$ and any $\psi\in H^s(Q)$ with $\int_Q \psi\,dx =0$,
$$
\sup_Q |\psi|^2 \leq C_s |Q|^{2s-1} t_Q^{(s)}[\psi] \,.
$$
\end{lemma}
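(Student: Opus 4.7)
The plan is to reduce to the scale-invariant case $|Q|=1$ and then combine the Morrey-type embedding $H^s([0,1]) \hookrightarrow C^{s-1/2}([0,1])$ with a fractional Poincar\'e inequality and the intermediate value theorem.

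\emph{Scaling reduction.} Both sides of the claimed inequality transform identically under the dilation $\psi_L(x):=\psi(Lx)$, which maps $H^s(Q)$ onto $H^s([0,1])$ when $|Q|=L$: the supremum is invariant, and a direct change of variables gives $t_Q^{(s)}[\psi] = L^{1-2s}\,t_{[0,1]}^{(s)}[\psi_L]$, so that the factor $|Q|^{2s-1}=L^{2s-1}$ cancels. It therefore suffices to prove
$$
\sup_{[0,1]}|\psi|^2 \leq C_s\, t_{[0,1]}^{(s)}[\psi]
\qquad\text{for all } \psi\in H^s([0,1]) \text{ with } \int_0^1\psi\,dx=0.
$$

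\emph{Sobolev embedding.} For $1/2<s<1$, I would establish that any $\psi\in H^s([0,1])$ admits a continuous representative satisfying the Morrey bound
$$
|\psi(x)-\psi(y)|^2 \leq C_s\,|x-y|^{2s-1}\bigl(t_{[0,1]}^{(s)}[\psi] + \|\psi\|_{L^2(0,1)}^2\bigr),
\qquad x,y\in[0,1].
$$
The cleanest route is to extend $\psi$ to $\widetilde\psi\in H^s(\R)$ (e.g.\ by reflection across the endpoints of $[0,1]$ followed by a smooth cutoff), checking the seminorm bound $[\widetilde\psi]_{H^s(\R)}^2 \leq C_s\bigl(t_{[0,1]}^{(s)}[\psi] + \|\psi\|_{L^2(0,1)}^2\bigr)$, and then applying Fourier inversion together with Cauchy--Schwarz on $\R$ to obtain
$$
|\widetilde\psi(x)-\widetilde\psi(y)|^2 \leq \frac{1}{2\pi}\left(\int_\R \frac{|e^{ipx}-e^{ipy}|^2}{|p|^{2s}}\,dp\right) \int_\R |p|^{2s}|\widehat{\widetilde\psi}(p)|^2\,dp.
$$
The substitution $q=p|x-y|$ turns the first factor into $2|x-y|^{2s-1}\int_\R (1-\cos q)|q|^{-2s}\,dq$, and this last integral converges at both endpoints precisely because $1/2<s<1$.

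\emph{Fractional Poincar\'e.} Writing $\psi(x)=\int_0^1(\psi(x)-\psi(y))\,dy$ (valid because $\int_0^1\psi\,dy=0$) and applying Cauchy--Schwarz with the weight $|x-y|^{(1+2s)/2}$ gives
$$
|\psi(x)|^2 \leq \left(\int_0^1 |x-y|^{1+2s}\,dy\right) \int_0^1 \frac{|\psi(x)-\psi(y)|^2}{|x-y|^{1+2s}}\,dy
\leq \int_0^1 \frac{|\psi(x)-\psi(y)|^2}{|x-y|^{1+2s}}\,dy,
$$
where the last step uses $|x-y|\leq 1$ on $[0,1]$. Integrating in $x$ yields $\|\psi\|_{L^2(0,1)}^2 \leq C_s\, t_{[0,1]}^{(s)}[\psi]$ under the mean-zero assumption.

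\emph{Conclusion.} Substituting the Poincar\'e bound into the Morrey bound gives $|\psi(x)-\psi(y)|^2\leq C_s\,t_{[0,1]}^{(s)}[\psi]$ for all $x,y\in[0,1]$. Since the continuous $\psi$ has mean zero on $[0,1]$, the intermediate value theorem supplies some $x_0\in[0,1]$ with $\psi(x_0)=0$; setting $y=x_0$ then produces $\sup_{[0,1]}|\psi|^2\leq C_s\,t_{[0,1]}^{(s)}[\psi]$, and undoing the scaling completes the proof. The main technical burden is the quantitative extension underlying the Morrey step; a direct interval-based argument via averaging over shrinking subintervals around $x$ and $y$, in the spirit of Campanato, would be an alternative avoiding the extension operator, but the Fourier route through $\R$ appears to be the most transparent.
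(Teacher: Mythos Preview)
Your proof is correct but takes a more circuitous route than the paper's. The paper invokes the Garsia--Rodemich--Rumsey lemma---precisely the ``Campanato-style averaging'' alternative you allude to at the end---which gives directly
$$
\frac{|\psi(a)-\psi(b)|^2}{(b-a)^{2s-1}} \leq D_s \int_a^b\!\int_a^b \frac{|\psi(x)-\psi(y)|^2}{|x-y|^{1+2s}}\,dx\,dy
$$
for any subinterval $[a,b]\subset Q$, with the explicit constant $D_s=(16(2s+1)/(2s-1))^2$. This is a H\"older bound purely in terms of the Gagliardo seminorm on the interval itself, so no extension to $\R$, no Fourier computation, and no Poincar\'e inequality are needed: one simply applies it with $a,b$ equal to the zero of $\psi$ and the point realizing $\sup|\psi|$. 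Your route through an extension operator naturally produces the full $H^s$ norm (seminorm plus $L^2$), which is why you then need the fractional Poincar\'e step to absorb the $L^2$ contribution. Both arguments culminate in the same intermediate-value-theorem device, but the GRR approach is shorter, stays intrinsic to $Q$, and yields an explicit constant.
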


\begin{proof}
By a density argument we may assume that $\psi$ is continuous. We know from \cite{GRR} (with $\Psi(x)=x^2$ and $p(x) = |x|^{s+1/2}$) and a simple scaling argument that for any $a<b$ and any continuous function $\phi$ on $[a,b]$,
$$
\frac{|\phi(a)-\phi(b)|^2}{(b-a)^{2s-1}} \leq D_s \,\int_a^b \int_a^b \frac{|\phi(x)-\phi(y)|^2}{|x-y|^{1+2s}}\,dx\,dy
$$
with $D_s = (16(2s+1)/(2s-1))^2$. Since $\int_Q \psi\,dx =0$ there is a $c\in Q$ such that $\psi(c)=0$. Moreover, let $d\in Q$ be such that $|\psi(d)|=\sup|\psi|$. We apply the above inequality with $a=\min\{c,d\}$ and $b=\max\{c,d\}$ and note that $b-a\leq |Q|$ to obtain the lemma.
\end{proof}

The quadratic form $t_Q^{(s)}[\psi]$ is non-negative and closed in $L^2(Q)$ and therefore generates a self-adjoint operator, which we denote by $T_Q^{(s)}$. In some sense this corresponds to imposing Neumann boundary conditions on $\partial Q$.

\begin{lemma}\label{ltint}
Let $d=1$, $1/2<s<1$ and let $C_s$ be the constant from Lemma \ref{sobolev}. Let $Q\subset\R$ be a bounded interval and assume that $V\in L^1(Q)$ satisfies
$$
\alpha:= |Q|^{2s-1} \int_Q V_-\,dx < C_s^{-1} \,.
$$
Then $T_Q^{(s)}+V$ has at most one negative eigenvalue $E$ and this eigenvalue satisfies, if it exists,
$$
E \geq - \alpha^{-1/(2s-1)} (1-C_s\alpha)^{-1} \left( \int_Q V_-\,dx \right)^{2s/(2s-1)} \,.
$$
\end{lemma}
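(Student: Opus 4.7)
The plan is to derive both conclusions from the min--max principle together with the decomposition of $\psi \in H^s(Q)$ into its mean value and a mean-zero remainder, using Lemma \ref{sobolev} to control the remainder on the codimension-one subspace of mean-zero functions.

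\textbf{At most one negative eigenvalue.} I would first restrict the quadratic form of $T_Q^{(s)}+V$ to the codimension-one subspace $\mathcal M := \{\psi \in H^s(Q) : \int_Q \psi\,dx = 0\}$. For $\psi \in \mathcal M$, Lemma \ref{sobolev} gives $\sup_Q |\psi|^2 \leq C_s |Q|^{2s-1} t_Q^{(s)}[\psi]$, hence
\[
\int_Q V|\psi|^2\,dx \geq -\int_Q V_- |\psi|^2\,dx \geq -\sup_Q|\psi|^2 \int_Q V_-\,dx \geq -C_s\alpha\, t_Q^{(s)}[\psi].
\]
Since $\alpha < C_s^{-1}$, this yields $t_Q^{(s)}[\psi] + \int V|\psi|^2\,dx \geq (1-C_s\alpha)\, t_Q^{(s)}[\psi] \geq 0$ on $\mathcal M$. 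The min--max principle then implies that $T_Q^{(s)}+V$ has at most one eigenvalue below zero.

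\textbf{Lower bound on the eigenvalue.} Set $\beta := (1-C_s\alpha)^{-1} |Q|^{-1}\! \int_Q V_-\,dx$; a direct computation using $\alpha = |Q|^{2s-1}\!\int V_-$ shows that $\beta$ equals the claimed upper bound for $-E$. By the variational principle it suffices to prove $t_Q^{(s)}[\psi] + \int V|\psi|^2\,dx + \beta\|\psi\|^2 \geq 0$ for all $\psi\in H^s(Q)$. Decompose $\psi = c + \phi$, where $c = |Q|^{-1}\int_Q\psi\,dx$ and $\phi \in \mathcal M$; then $\|\psi\|^2 = |c|^2|Q| + \|\phi\|^2$ and $t_Q^{(s)}[\psi] = t_Q^{(s)}[\phi]$. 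From $|\psi| \leq |c| + \|\phi\|_\infty$ pointwise and Lemma \ref{sobolev}, one gets
\[
\int_Q V_-|\psi|^2\,dx \leq \bigl(|c| + \|\phi\|_\infty\bigr)^2 \! \int_Q V_-\,dx, \qquad \|\phi\|_\infty \leq \sqrt{C_s|Q|^{2s-1}\, t_Q^{(s)}[\phi]}.
\]
Writing $M := \sqrt{t_Q^{(s)}[\phi]}$, $A := \sqrt{C_s|Q|^{2s-1}}$, $w := \int V_-\,dx$, and discarding the non-negative term $\beta \|\phi\|^2$, I expect to arrive at
\[
t_Q^{(s)}[\psi] + \int V|\psi|^2\,dx + \beta\|\psi\|^2 \geq (1-C_s\alpha) M^2 - 2|c|AwM + |c|^2(\beta|Q| - w).
\]
The right-hand side is a quadratic in $M$ with positive leading coefficient $1-C_s\alpha$; non-negativity for all real $M$ is equivalent to a non-positive discriminant, which (using $A^2 w = C_s\alpha$) simplifies exactly to $\beta|Q|(1-C_s\alpha) \geq w$. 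This is an equality for the chosen $\beta$, so the bound holds.

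\textbf{Main obstacle.} The delicate point is not any individual estimate but the bookkeeping showing that the sharp choice of $\beta$ is compatible with throwing away the crossterm $2\,\mathrm{Re}\,\bar c\!\int V_-\phi$ (bounded via $|\phi|\leq\|\phi\|_\infty$) and the $\beta\|\phi\|^2$ term. That these losses still leave the discriminant criterion saturated at exactly the stated value of $\beta$ is what makes the final constant come out as $\alpha^{-1/(2s-1)}(1-C_s\alpha)^{-1}(\int V_-)^{2s/(2s-1)}$, and this is the only step one has to verify carefully.
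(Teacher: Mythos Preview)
Your proof is correct and follows essentially the same approach as the paper: both restrict to the mean-zero subspace for the ``at most one negative eigenvalue'' claim, and both decompose $\psi = c + \phi$ and invoke Lemma~\ref{sobolev} on $\phi$ for the lower bound. The only difference is cosmetic: where the paper bounds $(|c|+\|\phi\|_\infty)^2$ via the parametric Young inequality $(a+b)^2 \leq (1+\beta)a^2 + (1+\beta^{-1})b^2$ and then optimizes over $\beta$, you expand the square and analyze the resulting quadratic in $M=\sqrt{t_Q^{(s)}[\phi]}$ via its discriminant---these are two equivalent ways of completing the square, and both yield the same sharp constant.
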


\begin{proof}
If $\psi\in H^s(Q)$ satisfies $\int_Q \psi\,dx =0$, then by Lemma \ref{sobolev}
\begin{align*}
t_Q^{(s)}[\psi] + \int_Q V|\psi|^2\,dx & \geq t_Q^{(s)}[\psi] - \int_Q V_-\,dx\, \sup_Q |\psi|^2 \geq  t_Q^{(s)}[\psi] \left( 1 - C_s |Q|^{2s-1}\int_Q V_-\,dx \right) \\
& =  t_Q^{(s)}[\psi] \left( 1 - C_s \alpha \right) \geq 0 \,.
\end{align*}
Thus, $E_2(T^{(s)}_Q+V)\geq 0$.

For general $\psi\in H^s(Q)$ we set $\psi_Q:= |Q|^{-1} \int_Q\psi\,dx$ and bound similarly, for any $\beta>0$,
\begin{align*}
t_Q^{(s)}[\psi] + \int_Q V|\psi|^2\,dx & \geq t_Q^{(s)}[\psi] - \int_Q V_-\,dx \left( \sup_Q |\psi-\psi_Q| + |\psi_Q| \right)^2 \\
& \geq t_Q^{(s)}[\psi] - \int_Q V_-\,dx \left( \left( C_s |Q|^{2s-1} t_Q^{(s)}[\psi] \right)^{1/2} + |Q|^{-1/2} \|\psi\| \right)^2 \\
& \geq t_Q^{(s)}[\psi] \left( 1- (1+\beta) C_s|Q|^{2s-1} \int_Q V_-\,dx \right) \\
& \qquad\qquad\ - (1+\beta^{-1})|Q|^{-1} \int_Q V_-\,dx\, \|\psi\|^2 \\
& = t_Q^{(s)}[\psi] \left( 1- (1+\beta) C_s \alpha \right) \\
& \qquad\qquad\ - (1+\beta^{-1}) \alpha^{-1/(2s-1)} \left( \int_Q V_-\,dx \right)^{2s/(2s-1)} \|\psi\|^2 \,.
\end{align*}
With the choice $\beta = (1-C_s\alpha)/(C_s\alpha)$ we finally obtain
$$
t_Q^{(s)}[\psi] + \int_Q V|\psi|^2\,dx \geq - \frac{1}{1-C_s\alpha} \alpha^{-1/(2s-1)} \left( \int_Q V_-\,dx \right)^{2s/(2s-1)} \|\psi\|^2 \,,
$$
which implies the lower bound on $E_1(T^{(s)}_Q+V)$ in the lemma.
\end{proof}

\begin{proof}[Proof of Theorem \ref{lt} for $d=1$, $1/2<s<1$, $\gamma=1-1/(2s)$]
Let $C_s$ be the constant from Lemma \ref{sobolev} and fix $0<\alpha<C_s$ to be chosen later. We claim that there are disjoint open intervals $Q_n$ whose closed union covers $\supp V_-$ and such that
$$
|Q_n|^{2s-1} \int_{Q_n} V_-\,dx = \alpha 
\qquad\text{for all}\ n \,.
$$
In fact, pick $x_0\in\R$ arbitrary and define $x_{k+1}$ inductively, given $x_k$, as follows: If $V_-\equiv 0$ on $(x_k,\infty)$ we stop the procedure. Otherwise, since $\ell\mapsto \ell^{2s-1} \int_{x_k}^{x_k+\ell} V_-\,dx$ is non-decreasing and unbounded, we can find $x_{k+1}>x_k$ such that 
$$
(x_{k+1}-x_k)^{2s-1} \int_{x_k}^{x_{k+1}} V_-\,dx = \alpha \,.
$$
Since $(x_{k+1}-x_k)^{2s-1} \geq \alpha/ \int_{x_0}^\infty V_-\,dx$, we will eventually cover $\supp V_-\cap [x_0,\infty)$. Now we repeat the same argument to the left of $x_0$. The $Q_n$'s are all the intervals $(x_k,x_{k+1})$.

We have
$$
\|(-\Delta)^{s/2}\psi\|^2 = a_{1,s} \iint_{\R\times\R} \frac{|\psi(x)-\psi(y)|^2}{|x-y|^{1+2s}}\,dx\,dy \geq 
\sum_n t_{Q_n}^{(s)}[\psi] \,,
$$
which, by the variational principle, implies that
$$
(-\Delta)^s + V \geq \sum_n \left( T_{Q_n}^{(s)} + V_{Q_n} \right),
$$
where $V_{Q_n}$ denotes the restriction of $V$ to $Q_n$, and therefore
$$
\tr\left( (-\Delta)^s + V \right)_-^{\frac{2s-1}{2s}} \leq \tr\left( \sum_n \left( T_{Q_n}^{(s)} + V_{Q_n} \right) \right)_-^{\frac{2s-1}{2s}} = \sum_n \tr \left( T_{Q_n}^{(s)} + V_{Q_n} \right)_-^{\frac{2s-1}{2s}}.
$$
According to Lemma \ref{ltint},
$$
\tr \left( T_{Q_n}^{(s)} + V_{Q_n} \right)_-^{\frac{2s-1}{2s}} \leq \alpha^{-\frac1{2s}} (1-C_s\alpha)^{-\frac{2s-1}{2s}} \int_{Q_n} V_-\,dx \,.
$$
Summing over $n$, we obtain
$$
\tr\left( (-\Delta)^s + V \right)_-^{\frac{2s-1}{2s}} \leq \alpha^{-\frac1{2s}} (1-C_s\alpha)^{-\frac{2s-1}{2s}} \int_{\R} V_-\,dx \,.
$$
We can optimize this in $\alpha$ by choosing $\alpha=1/(2sC_s)$ and obtain
$$
\tr\left( (-\Delta)^s + V \right)_-^{\frac{2s-1}{2s}} \leq C_s^\frac1{2s} \frac{2s}{(2s-1)^{\frac{2s-1}{2s}}} \int_{\R} V_-\,dx \,.
$$
This proves the theorem.
\end{proof}

%%%%%%%%%%%%%%%%%%%%%%%%%%%%%%%%%%%%%%%%%%%%%%%%%%%%%%%%%%%%%%%%%%%%%%%%%%%%%%%%
%%%%%%%%%%%

\bibliographystyle{amsalpha}

\end{document}